\theoremstyle{plain}
    \newtheorem{thm}{Theorem}[section]
     \newtheorem{conjecture}[thm]{Conjecture}
    \newtheorem{lemma}[thm]{Lemma}
    \newtheorem{proposition}[thm]{Proposition}
    \newtheorem{question}[thm]{Question}
    \newtheorem{theorem}[thm]{Theorem}
\theoremstyle{definition}
    \newtheorem{definition}[thm]{Definition}
    \newtheorem{notation}[thm]{Notation}
    \newtheorem*{notation*}{Notation and Terminology}
    \newtheorem{remark}[thm]{Remark}
\theoremstyle{remark}
\newcommand{\arxiv}[1]{\href{https://arxiv.org/abs/#1}{{\tt arXiv:#1}}}
\newcommand{\bP}{\mathbb{P}}
\newcommand{\bQ}{\mathbb{Q}}
\newcommand{\bR}{\mathbb{R}}
\newcommand{\bZ}{\mathbb{Z}}
\newcommand{\bF}{\mathbb{F}}
\newcommand{\alb}{\operatorname{alb}}
\newcommand{\Aut}{\operatorname{Aut}}
\newcommand{\GL}{\operatorname{GL}}
\newcommand{\id}{\operatorname{id}}
\newcommand{\NS}{\operatorname{NS}}
\newcommand{\PEC}{\operatorname{PEC}}
\newcommand{\red}{\mathrm{red}}
\newcommand{\mstriangle}[1]{
\begin{tikzpicture}[x=0.3cm,y=0.3cm]
\draw (-0.4,-0.433) -- (1.4,-0.433);
\draw (-0.2,-0.7794) -- (0.7,0.7794);
\draw (1.2,-0.7794) -- (0.3,0.7794);
\end{tikzpicture}
}
\newcommand{\mssharp}[1]{
\begin{tikzpicture}[x=0.3cm,y=0.3cm]
\draw (-0.8,-0.5) -- (0.8,-0.5);
\draw (-0.8,0.5) -- (0.8,0.5);
\draw (-0.5,-0.8) -- (-0.5,0.8);
\draw (0.5,-0.8) -- (0.5,0.8);
\end{tikzpicture}
}
\newcommand{\Rmnum}[1]{\expandafter\@slowromancap\romannumeral #1@}
\begin{document}

\title[Zariski dense orbit conjecture]{A note on Zariski dense orbit conjecture}
\author{Sichen Li}
\address{
School of Mathematics, East China University of Science and Technology, Shanghai 200237, P. R. China}
\email{\href{mailto:sichenli@ecust.edu.cn}{sichenli@ecust.edu.cn}}

\begin{abstract}
In this paper we first note a result of birational automorphisms with bounded degree of projective varieties related with the Zariski dense orbit conjecture (ZDO) and the Zariski density of periodic points.
Next, we give a reduced result of  ZDO for automorphisms of projective threefolds, and show ZDO for automorphisms of projective varieties $X$ with the irregularity $q(X)\ge\dim X-1$. 
\end{abstract}

\subjclass[2010]{
37P55, 
14E30,  
08A35.
}
\keywords{Zariski dense orbit conjecture, birational automorphisms with bounded degree, projective threefolds, Zariski density of periodic points}
\thanks{The research is supported by  Shanghai Sailing Program (No. 23YF1409300).}
\maketitle
\section{Introduction}
\subsection{Zariski dense orbit conjecture}
The following Zariski dense orbit  conjecture (ZDO) was proposed by Medvedev and Scanlon \cite[Conjecture 5.10]{MS14},  by Amerik, Bogomolov and Rovinsky \cite{ABR11} and strengthens a conjecture of S.-W. Zhang \cite{Zhang06}.
\begin{conjecture}
\label{MSC}
Let $X$ be a projective variety over an algebraically closed field $k$ of characteristic zero and $f: X\dasharrow X$ a dominant rational self-map.
Then either $k(X)^f\ne k$ or there is a point $x\in X_f(k)$ whose orbit $\mathcal O_f(x)$ is Zariski dense in $X(k)$.
\end{conjecture}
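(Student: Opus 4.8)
Since the conjecture in full generality is open, I describe the route I would take to the cases singled out in the abstract: $f$ an automorphism and $q(X) \ge \dim X - 1$. Passing to an $f$-equivariant resolution I may assume $X$ smooth, and the governing tool is the Albanese morphism. Write $A = \Alb(X)$, an abelian variety of dimension $q(X)$, and let $\alb_X \colon X \to A$ denote the Albanese map. By functoriality the automorphism $f$ induces an automorphism $g \colon A \to A$ of the shape $g(z) = \varphi(z) + c$ with $\varphi \in \Aut(A)$ a group automorphism and $c \in A$, and the Albanese square commutes: $\alb_X \circ f = g \circ \alb_X$. The idea is to exploit the dichotomy of ZDO on $A$ --- where the conjecture is already known for such affine automorphisms --- and then transport the outcome up along $\alb_X$.

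The first, formal, step settles the case where $g$ preserves a nonconstant rational function $\psi \in k(A)^g$. Then $\psi \circ \alb_X \in k(X)$ is nonconstant, since $\alb_X$ is dominant onto its image (which generates $A$), and it is $f$-invariant because $(\psi \circ \alb_X) \circ f = \psi \circ (g \circ \alb_X) = (\psi \circ g) \circ \alb_X = \psi \circ \alb_X$; hence $k(X)^f \ne k$ and we are finished. I may therefore assume $k(A)^g = k$, and invoke ZDO on the abelian variety to fix a point $b \in A(k)$ whose $g$-orbit is Zariski dense in $A$.

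Everything then reduces to lifting this dense base orbit to a dense orbit on $X$, and this is where the two ranges of $q(X)$ diverge. If $q(X) = \dim X$ and the Albanese map has maximal Albanese dimension, then $\alb_X$ is generically finite onto $A$ of some degree $d$; a point $x$ with $\alb_X(x) = b$ has $\alb_X(f^n(x)) = g^n(b)$, so the orbit closure $\overline{\{f^n(x)\}}$ dominates $A$, and one shows that for a suitable sheet it meets every fibre over the orbit, i.e. is all of $X$ --- essentially an orbit-lifting statement through a generically finite equivariant cover. If instead $q(X) = \dim X - 1$ and $\alb_X$ is surjective, it is a fibration whose generic fibre is a \emph{curve} $C$, with $f$ carrying the fibre $X_{g^n(b)}$ isomorphically onto $X_{g^{n+1}(b)}$. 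Here I would run a fibred version of the argument, splitting on the genus of $C$: a compatible $f$-invariant fibrewise structure (an invariant multisection, a relative differential, or a fibrewise invariant function) yields either an invariant rational function on $X$, and hence $k(X)^f \ne k$, or enough rigidity to pick a starting point in $X_b$ whose forward orbit is simultaneously dense in the base and non-degenerate in the fibres, so that its closure avoids every proper subvariety.

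The crux --- and the step I expect to resist --- is exactly this lifting when $q(X) = \dim X - 1$: a Zariski dense orbit downstairs need not lift, because the fibre dynamics can trap the orbit inside a proper $f$-invariant subvariety (for instance an invariant section or multisection of the curve fibration), and excluding this is the genuinely non-formal point. I anticipate the genus-$0$ case to be the hardest, since ruled fibrations carry many invariant sections and the associated invariant functions must be ruled out individually; the positive-genus cases are rigid enough that fibrewise automorphisms are severely constrained, and the density already secured on $A$ should propagate. A subsidiary bookkeeping issue is to confirm that the chosen initial point lies in $X_f(k)$, i.e. that all iterates are defined there --- automatic for an honest automorphism, but to be re-checked after any birational modification used to turn $\alb_X$ into a morphism with the desired fibre structure.
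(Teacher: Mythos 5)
The statement you were given is the conjecture itself, which is open; the paper does not prove it, and your reframing to the case of automorphisms with $q(X)\ge\dim X-1$ correctly targets what the paper actually establishes, namely Proposition \ref{q(X)>X-1}. Your overall architecture --- descend along the Albanese map, invoke the known ZDO for abelian varieties, then lift --- is exactly the paper's, but the step you flag as ``the crux'' and leave open is a genuine gap, and it is precisely the step the paper does not argue from scratch: it imports it. The paper's Proposition \ref{X=Y+1}, which is \cite[Lemma 5.1]{BGR17}, says that if $\pi\colon X\to Y$ is a surjective morphism of projective varieties with $\dim X=\dim Y+1$, $f$ is an automorphism, $g\circ\pi=\pi\circ f$, and $(Y,g)$ satisfies the ZD-property, then $(X,f)$ satisfies the ZD-property. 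This is exactly your ``fibred lifting'' statement for curve fibrations, with no case division on the genus of the fibre; your worry that invariant multisections or fibrewise invariant structures could trap every orbit is the actual mathematical content of that lemma, and your sketch (split on genus, rule out invariant sections ``individually'' in genus $0$) does not prove it. As written, your argument for $q(X)=\dim X-1$ is therefore incomplete; it becomes complete the moment you quote Bell--Ghioca--Reichstein instead of attempting to rederive their lemma.

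Two smaller points where your proposal is looser than the paper. First, you assume the Albanese map is surjective onto $A$ (fibration case) or generically finite onto $A$ (maximal case); for a normal projective variety this needs justification, and the paper gets it from \cite[Proposition 5.1]{CLO22} (or \cite[Proposition 3.7]{LM21}), then replaces the rational Albanese map by a morphism via its graph together with Proposition \ref{birational}. Second, in the case $q(X)=\dim X$, a Zariski dense orbit downstairs does not formally lift through a generically finite equivariant map: one needs the \emph{strong} ZD-property on $A$ --- a dense orbit avoiding any prescribed proper closed subset, so that it misses the branch and indeterminacy loci --- which is Xie's theorem \cite[Theorem 1.14 and Corollary 3.31]{Xie22}; the paper packages this as Proposition \ref{birational} and Theorem \ref{Q-abelian}. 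Your phrase ``for a suitable sheet it meets every fibre over the orbit'' gestures at this, but the strong form of density on the base is what actually makes the lift work.
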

\begin{remark}
The conidtion $k(X)^f\ne k$ is the same as saying that there is nonconstant rational function $\psi: X\to Y:=\bP^1$ such that $\psi\circ f=\psi$.
So there is not any point $x\in X_f(k)$ such that $\mathcal O_f(x)$ is Zariski dense.
It is immediate to see that such a condtion is absolutely necessary in order to hope for conclusion in Conjecture \ref{MSC} to hold; the difficultly in Conjecture \ref{MSC} is to prove that such a condition is indeed sufficient for the existence of a Zariski dense orbit.
\end{remark}
\subsection{Historical note}
Amerik and Campana \cite{AC08} proved Conjecture \ref{MSC} when assume that $k$ is uncountable.
 In \cite[Corollary 9]{Amerik11}, Amerik proved the existence of non-preperiodic algebraic point when $f$ is of infinite order.
In fact, Conjecture \ref{MSC} is also true in positive characteristic, as long as the base field $k$ is uncountable (see \cite[Corollary 6.1]{BGR17}).
On the other hand, when the transcendence degree of $k$ over $\bF_p$ is smaller than the dimension of $X$, there are counterexamples to the corresponding variant of Conjecture \ref{MSC}  in characteristic $p$ (as shown in \cite[Example 6.2]{BGR17}).
In \cite{ABR11}, Amerik, Bogomolv and Rovinsky proved Conjecture \ref{MSC} under the assumption $k=\overline{\bQ}$ and $f$ has a fixed point $p$ which is smooth and such that the eigenvalues of $df|_p$ are nonzero and multiplicatively independent.

In \cite[Theorem 1.3]{BGT15}, Bell, Ghioca and Tucker proved that if $f$ is an automorphism without nonconstant invariant rational function, then there exists a subvariety of codimension 2 whose orbit under $f$ is Zariski dense.
 In \cite[Corollary 1.7]{BGRS17}, Bell, Ghioca, Reichstein and Satriano proved that the conjecture  for all smooth minimal 3-folds of Kodaria dimension 0 with sufficiently large  Picard number, contingent on certain conjectures in the minimal model program.

In \cite[Theorem 1.1]{Xie17}, Xie proved Conjecture \ref{MSC} for dominant polynomial endomorphism $f: \mathbb A^2\to\mathbb A^2$.
In \cite[Theorem 1.11]{Xie22},  Xie proved  Conjecture \ref{MSC} for $f=(f_1,\cdots, f_n): (\bP^1)^n\to (\bP^1)^n$, where the $f_i$'s are endomorphisms of $\bP^1$, in \cite[Theorem 1.11]{Xie22}.
See also \cite[Theorem 14.3.4.2]{BGT16}, where $f_i$'s are not post-critically finite; and Medvedev and Scanlon \cite[Theorem 7.16]{MS14} for endomorphism $f:\mathbb A^n\to\mathbb A^n$ with
\begin{equation*}
f(x_1,\cdots, x_n)=(f_1(x_1),\cdots, f_n(x_n)), f_i(x_i)\in k[x_i].
\end{equation*}

If $X$ is a (semi-) abelian variety and $f$ is a dominant self-map, Conjecture \ref{MSC} has been proved by Ghioca, Satriano and Scanlon in \cite[Theorem 1.2]{GS17} and \cite[Theorem 1.1]{GS19}.
In \cite[Theorem 1.4]{GX18}, Ghioca and Xie showed that if $(X,f)$ satisfies the strong ZD-property, then $(X\times\mathbb A_{k}^n,\psi)$ satisfies the strong ZD-property, where the dynamical system $\psi: X\times\mathbb A_k^n\dashrightarrow X\times\mathbb A_k^n$ is given by $(x,y)\longmapsto (\varphi(x), A(x)y)$ and $A\in\GL(n,k(X))$.
In general, Xie showed in \cite[Theorem 3.34]{Xie22} that if $(X,f)$ satisfies the AZD-property, then $(X,\mathbb A_k^n,\psi)$ satisfies the AZD-property.

When $X$ is an algebraic surface, and $f$ is a birational self-map, Conjecture \ref{MSC} has been proved by Xie in \cite[Corollary 3.33]{Xie22}.
On the other hand, Xie in \cite[Theorem 1.15]{Xie22} proved Conjecture \ref{MSC} when $f$ is a surjective endomorphism of a smooth projective surface.
And then Xie, Jia and Zhang \cite[Theorem 1.9]{JXZ23} proved Conjecture \ref{MSC} for a surjective endomorphism of a projective surface.

A dominant rational self-map on a projective variety is called $p$-cohomologically hyperbolic if the $p$-th dynamical degree is strictly larger than other dynamical degrees.
For such a map over $\overline{\bQ}$, Jia, Shibata, Xie and Zhang \cite[Theorem 1.12]{JSXZ21}, Matuszawa and Wang \cite{MW22} showed that there are $\overline{\bQ}$-points with Zariski dense orbits if $f$ is an 1-cohomologically hyperbolic map on a smooth projective variety.

Let $f: X\to X$ be a non-isomorphic surjective endomorphism of a smooth projective threefold $X$.
Meng and Zhang \cite{MZ23} proved that any birational minimal model program becomes $f$-equivariant after iteration, provided that $f$ is $\delta$-primitive.
Here, $\delta$-primitive means that there is no $f$-equivariant (after iteration) dominant rational map $\pi: X\dasharrow Y$ to a positive lower-dimensional projective variety $Y$ such that the first dynamical degree remains unchanged.
As an application, they reduced the Zariski dense orbit conjecture for $f$ to a terminal threefold with only $f$-equivariant Fano contractions.

Now we consider the variants of the Zariski dense orbit conjecture in positive characteristic proposed in  \cite[Conjecture 1.3]{GS21} and \cite[Section 1.6]{Xie23}.
Ghoica and Saleh \cite{GS21,GS22,GS23} proved  the conjecture dense orbit conjecture in positive characteristic for regular self-maps of the tori $\mathbb G_m^N$, the split semiabelian varieties, and the additive group scheme $\mathbb G_a^N$.
Now let $X$ be a projective variety over an algebraically closed field $\mathbf  k$ of positive characteristic over $\overline{\bF_p}$.
Xie proved \cite[Proposition 1.7]{Xie23} the Zariski dense orbit conjecture in positive characteristic when $\mathrm{trdeg}_{\overline{\bF_p}}\mathbf k\ge\dim X$.
When  $\mathrm{trdeg}_{\overline{\bF_p}}\mathbf k\ge1$,  Xie  proved in \cite[Theorem 1.10 and 1.11]{Xie23}  the Zariski dense orbit conjecture in positive characteristic  for an automorphism of a projective surface,  and an 1-cohomologically hyperbolic dominant endomorphism $f$ of a smooth projective variety of dimension $d\ge2$.
\subsection{Main results}
Let  $f:X\dashrightarrow X$ be a dominant rational map of a projective variety $X$ and $H$ an ample divisor on $X$ .
We define the {\it first dynamical degree $\delta_f$ of $f$} as
\begin{equation*}
           \delta_f=\lim_{n\to\infty}((f^n)^*(H)\cdot H^{\dim X-1})^{1/n},
\end{equation*}
where $H$ is a nef and big Cartier divisor of $X$ (cf. \cite{DS05}).

Let $\NS(X)$ be the Neron-Severi group of $X$ and $\NS_{\bR}(X)=\NS(X)\otimes_\bZ\bR$.
Recall that $f^*|_{\NS_{\bR}(X)}$ have three different type: elliptic, parabolic or hyperbolic.
These situations can be read on the growth of the iterates of $f^*$.
If $||\cdot||$ is any norm on $\NS_\bR(X)$, they correspond  respectively to the following situations: $||(f^*)||$ is bounded, $||(f^*)^n||\sim cn^k$ for some $k>0$ and $c>0$, and $||(f^*)^n||\sim \delta_f^n$ for $\delta_f>1$.

In \cite[Theorem 5.1]{Fakhruddin03}, Fakhruddin proved the Zariski density  of periodic points if $f$ is a polarized endomorphism of a projective variety.
 Xie \cite{Xie15} give a complete classification of birational surface maps whose periodic points are Zariski-dense.
 
In this paper we first note a result of Zariski dense orbits and Zariski density of periodic points for birational automorphisms with bounded degree of projective varieties as follows.
\begin{theorem}
\label{thm-bounded}
ZDO is true for birational automorphisms with bounded degree of projective varieties.	
Then ZDO is true for $f$.
Moreover, suppose that $f$ is an automorphism of a projective variety $X$ and one of the following cases holds:
\begin{enumerate}
	\item $X$ has the Picard number one.
	\item  $X$ is Fano.
	\item $\delta_f=1$ and the  pseduoeffective cone  is polyhedral.
\end{enumerate}
Then the following statements hold.
\begin{enumerate}
	\item ZDO is true for $f$.
	\item the set of periodic points is Zariski dense if and only if $f$ is of finite order.
\end{enumerate}
\end{theorem}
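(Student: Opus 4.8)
The plan is to reduce all of the stated situations to a single one, namely that $f$ is a \emph{biregular} automorphism whose pullback $f^{*}$ has \emph{finite order} on $\NS_{\bR}(X)$, and then to run one uniform argument built on the algebraic group $\Aut^{0}(X)$. For the first assertion, bounded degree is exactly the elliptic case, so after invoking Weil's regularization theorem I may replace $f$ by a biregular automorphism of a projective model $X'$ birational to $X$; since ZDO is invariant under birational conjugacy (both the invariant field and the existence of a dense orbit are preserved), it suffices to treat $X'$, and there boundedness of $\{(f^{*})^{n}\}$ on the lattice $\NS(X')$ forces $f^{*}$ to have finite order. For the three cases of the ``Moreover'' part $f$ is already biregular, and I would verify finite order of $f^{*}$ by hand: in case (1), an automorphism acts as the identity on top cohomology, so $(f^{*}H)^{\dim X}=H^{\dim X}$ forces the scalar by which $f^{*}$ acts on $\NS_{\bR}(X)=\bR[H]$ to be $1$ and hence $f^{*}=\id$; in case (2), $f^{*}$ fixes $-K_{X}$ and preserves the rational polyhedral nef cone (Cone theorem), so it permutes finitely many extremal rays and, fixing the lattice, has finite order; in case (3), $f^{*}$ preserves the polyhedral pseudoeffective cone and the lattice $\NS(X)$, hence permutes the finitely many extremal rays and some power fixes each primitive generator, forcing that power to be the identity (this is consistent with, and slightly stronger than, the hypothesis $\delta_{f}=1$).

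Now assume $f^{*}$ has finite order. Then the image of $f$ in the component group $\Aut(X)/\Aut^{0}(X)$ is finite, so $f^{N}\in G:=\Aut^{0}(X)$ for some $N$, where $G$ is a connected algebraic group. Set $H:=\overline{\langle f\rangle}\subseteq\Aut(X)$; since $f^{N}\in G$, the closure $H$ meets only finitely many components of $\Aut(X)$, so $H$ is a commutative algebraic group of finite type. Two elementary facts drive the proof. First, for any rational function the locus $\{a:\psi\circ a=\psi\}$ is closed and contains $\langle f\rangle$, hence contains $H$, so $k(X)^{f}=k(X)^{H}$. Second, for the orbit map $\mu_{x}:H\to X$ one has $H\cdot x=\mu_{x}(\overline{\langle f\rangle})\subseteq\overline{\mu_{x}(\langle f\rangle)}=\overline{\langle f\rangle\cdot x}$, whence $\overline{\langle f\rangle\cdot x}=\overline{H\cdot x}$ for every $x$. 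If $k(X)^{f}=k$ then $k(X)^{H}=k$, so in characteristic zero (Rosenlicht) the generic $H$-orbit has dimension $\dim X$ and is therefore dense; for such a generic $x$ the identity $\overline{\langle f\rangle\cdot x}=\overline{H\cdot x}=X$ exhibits a Zariski dense $f$-orbit. This proves ZDO in all of the listed cases, and together with the regularization step it proves the first assertion.

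For the periodic points, the implication ``$f$ of finite order $\Rightarrow$ every point periodic'' is immediate. For the converse I argue by contraposition: suppose $f$ has infinite order. Then $H$ is infinite, so its identity component $H^{0}$ is a positive-dimensional connected commutative algebraic group, topologically generated by $f^{m}$ with $m=[H:H^{0}]$, and $H^{0}$ acts nontrivially on $X$ because $f^{m}\ne\id$. I claim the set of periodic points of $f$ equals the fixed locus $X^{H^{0}}$. Indeed, if $f^{k}x=x$ then $f^{mk}=(f^{m})^{k}$ lies in the closed stabilizer $\Stab_{H^{0}}(x)$; as $\langle f^{mk}\rangle$ has finite index in the connected group $H^{0}$ its closure is all of $H^{0}$, so $H^{0}\subseteq\Stab_{H^{0}}(x)$, i.e. $x\in X^{H^{0}}$. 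Conversely $H^{0}$ is normal in $H$, so $f$ preserves $X^{H^{0}}$, and $f^{m}$ acts trivially there, making every such point periodic. Since $H^{0}$ acts nontrivially, $X^{H^{0}}$ is a proper closed subvariety, and therefore the periodic points are not Zariski dense.

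The main obstacle is the identification of the periodic locus with $X^{H^{0}}$, together with the structural input that $H=\overline{\langle f\rangle}$ is genuinely an algebraic group of finite type and that its orbit closures coincide with those of the cyclic group $\langle f\rangle$. Both rest entirely on the reduction that $f^{*}$ has finite order on $\NS_{\bR}(X)$, which is why the geometric hypotheses (1)--(3) must first be converted into that single linear-algebraic statement before the $\Aut^{0}(X)$-machinery can be applied.
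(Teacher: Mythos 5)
Your overall strategy is the same as the paper's: regularize $f$ by Weil/Cantat's theorem, reduce every case to the single statement that $f^{*}$ has finite order on $\NS_{\bR}(X)$ (equivalently, by Fujiki--Lieberman, that some iterate of $f$ lies in $\Aut_0(X)$), take the Zariski closure $H$ of $\langle f\rangle$, and apply Rosenlicht's theorem. Your execution of that part is correct, and in two places it is actually more careful than the paper: working with the possibly disconnected group $H=\overline{\langle f\rangle}$ (rather than replacing $f$ by an iterate inside $\Aut_0(X)$) sidesteps the question of whether $k(X)^{f}=k$ implies $k(X)^{f^{N}}=k$, and your identification $\Per(f)=X^{H^{0}}$ gives a self-contained proof of the periodic-point dichotomy where the paper simply cites \cite[Proposition 6.2]{Xie15}. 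Cases (1) and (2) are also fine, although for a possibly singular Fano $X$ you are safer with the paper's route ($f^{*}$ fixes the ample class $-K_{X}$, then Fujiki--Lieberman) than with the Cone theorem.

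The genuine gap is in case (3). You claim that polyhedrality of $\PEC(X)$ alone forces a power of $f^{*}$ to be the identity, ``some power fixes each primitive generator,'' and you explicitly assert that this is stronger than, and independent of, the hypothesis $\delta_{f}=1$. This is false, because the extremal rays of a polyhedral pseudoeffective cone need not be rational, so they may carry no primitive lattice generators at all. Concretely, take a K3 surface $X$ with $\rho(X)=2$ whose N\'eron--Severi lattice represents neither $0$ nor $-2$: then $\PEC(X)=\Nef(X)$ is the closed positive cone, spanned by the two isotropic rays of the intersection form, hence polyhedral in the paper's sense (two generators), but these rays are irrational whenever the discriminant is not a perfect square; such $X$ admits automorphisms with $\delta_{f}=\lambda>1$ whose pullback fixes each of the two extremal rays while scaling them by $\lambda^{\pm 1}$, so no power of $f^{*}$ is the identity. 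Thus your argument, as written, would ``prove'' a false statement, and the hypothesis $\delta_{f}=1$ cannot be discarded. The repair is exactly the paper's use of \cite[Theorem 1.2(1)]{Li20}: since $f^{*}$ preserves the lattice $\NS(X)$ and the salient full-dimensional cone $\PEC(X)$, its spectral radius equals $\delta_{f}=1$; as $\det f^{*}=\pm 1$ and the characteristic polynomial has integer coefficients, all eigenvalues lie on the unit circle and are therefore roots of unity (Kronecker), so after replacing $f$ by an iterate all eigenvalues equal $1$; then the iterate permutes, hence (after a further iterate) fixes, each extremal ray, acting on it by a positive eigenvalue which must be $1$, so it is the identity on the full-dimensional cone $\PEC(X)$ and hence on $\NS_{\bR}(X)$. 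With case (3) corrected this way, the rest of your proof goes through.
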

Under the framework of dynamics on projective varieties by Kawamata, Nakayama and Zhang \cite{Kawamata85,Nakayama10,NZ09,NZ10,Zhang16}, Hu and the author \cite{HL21},  in \cite{Li22} we  gave a reduced result of the Kawaguchi-Silverman conjecture \cite{KS16} for automorphisms of projective threefolds, and showed the Kawaguchi-Silverman conjecture for automorphisms of projective varieties $X$ with the irregularity $q(X)\ge\dim X-1$. 
Using the same argument in  \cite[Proof of Theorems 1.4 and 1.6]{Li22}, we give the same results for ZDO as follows.
\begin{theorem}\label{reduced-auto}
Let $f$ be an automorphism of a normal projective  threefold $X$ with only klt singularities.
Suppose $K_X\sim_\bQ 0$ or  $\kappa(X)=-\infty$.
Then we may reduce ZDO for $(X,f)$ to the following three cases:
\begin{itemize}
\item[(1)] $X$ is weak Calabi-Yau and $f$ is primitive;
\item[(2)] $X$ is a  rationally connected threefold;
\item[(3)] $X$ is a  uniruled threefold admitting a special MRC fibration over an elliptic curve.
\end{itemize}
\end{theorem}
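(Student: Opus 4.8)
The plan is to transport the argument of \cite[Proofs of Theorems 1.4 and 1.6]{Li22} from the Kawaguchi--Silverman conjecture to ZDO, the point being that the two reduction mechanisms used there are equally available here. First, the canonical fibrations attached to $X$---the Albanese morphism, the maximal rationally connected (MRC) fibration, and the steps of an equivariant minimal model program in the sense of \cite{MZ23}---are preserved by $\Aut(X)$ and hence are $f$-equivariant, after replacing $f$ by a positive iterate if necessary. Second, ZDO is compatible with such an equivariant fibration $\pi\colon X\dasharrow Y$ with induced map $g\colon Y\dasharrow Y$: a nonconstant $g$-invariant rational function pulls back to an $f$-invariant one, so the hypothesis $k(X)^f=k$ of Conjecture \ref{MSC} forces $k(Y)^g=k$, while conversely a Zariski dense $g$-orbit can be propagated to a Zariski dense $f$-orbit once the fibre dynamics is controlled, by the strong ZD- and AZD-property transfer theorems of \cite{GX18, Xie22}. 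Because ZDO is already known for curves and, by \cite[Corollary 3.33]{Xie22}, for every birational self-map of a projective surface, any equivariant fibration of $X$ onto a base of dimension $1$ or $2$ lets us conclude; the reduction therefore halts exactly at the primitive pairs, which we show are those of types (1)--(3).

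Assume first $\kappa(X)=-\infty$, so $X$ is uniruled, and let $\psi\colon X\dasharrow Z$ be its MRC fibration with $Z$ not uniruled; being canonical, $\psi$ is $f$-equivariant. If $\dim Z=0$ then $X$ is rationally connected, which is case (2). If $\dim Z=2$ then $Z$ is a non-uniruled surface carrying an induced birational self-map $g$ and the fibres of $\psi$ are rational curves, so ZDO for $(Z,g)$ holds by \cite{Xie22} and transfers along the $\bP^1$-fibration by \cite{GX18, Xie22}; this case is thus disposed of. If $\dim Z=1$ then $Z$ is a smooth curve of genus $\ge 1$. When the genus is at least $2$ the group $\Aut(Z)$ is finite, so after a further iterate $g=\id$ and $\psi$ yields nonconstant $f$-invariant functions, whence $k(X)^f\ne k$ and the conclusion of Conjecture \ref{MSC} holds vacuously. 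The sole surviving subcase is $Z$ elliptic, which is precisely case (3).

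Assume next $K_X\sim_\bQ 0$. The Albanese morphism $\alb\colon X\to A:=\Alb(X)$ is $f$-equivariant, and we split according to $q(X)=\dim A$. If $q(X)\ge 2=\dim X-1$, then ZDO for $(X,f)$ follows from the result of this paper treating the range $q(X)\ge\dim X-1$. If $q(X)=1$, then $\alb$ is an $f$-equivariant fibration onto an elliptic curve with fibres that are surfaces of numerically trivial canonical class; ZDO holds on the base by \cite{GS17, GS19} and on the fibres by \cite{Xie22}, and transfers by \cite{GX18, Xie22}, so this case is settled. Finally let $q(X)=0$. If $f$ is not primitive, there is an $f$-equivariant fibration $X\dasharrow Y$ with $0<\dim Y<3$; by the canonical bundle formula its base and general fibre again have $\bQ$-trivial canonical class, and the mechanism above reduces $(X,f)$ to curves and surfaces. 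If $f$ is primitive, no such fibration exists, so $X$ is a klt Calabi--Yau threefold with trivial Albanese, that is, weak Calabi--Yau, and we are in case (1). Assembling the surviving cases yields exactly (1), (2) and (3).

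The main obstacle is the lifting half of the fibration compatibility. Descending the condition $k(X)^f=k$ along an equivariant $\pi$ is formal, but manufacturing a Zariski dense orbit upstairs from one on the base demands control of the fibrewise dynamics together with the transfer theorems of \cite{GX18, Xie22}; one must check in each instance that the fibration is of the product or affine-bundle type to which those results apply, and keep careful track of the passage between $f$ and an iterate $f^n$---relating $k(X)^f=k$ to $k(X)^{f^n}=k$, and a dense $f^n$-orbit to a dense $f$-orbit. Handled exactly as in \cite{Li22}, these points confine all the genuinely new difficulty to the three primitive cases (1)--(3), for which ZDO remains open.
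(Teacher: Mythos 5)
Your proposal follows the same skeleton as the paper (split by $\kappa$; special MRC fibration in the uniruled case; push ZDO along equivariant fibrations to bases of dimension $\le 2$ where it is known), but the load-bearing step --- lifting a dense orbit from the base to $X$ --- rests on the wrong tool, and this is a genuine gap. You repeatedly invoke the transfer theorems of \cite{GX18, Xie22}, but those apply only to skew-linear maps on products $X\times\mathbb{A}^n_k$ (affine-bundle type). None of the fibrations arising here --- the MRC fibration with rational curve fibers over a surface, or an Albanese fibration --- is of that form; you yourself flag that this ``must be checked in each instance,'' but the check would simply fail. What the paper actually uses is \cite[Lemma 5.1]{BGR17} (Proposition \ref{X=Y+1}): for a surjective morphism $\pi: X\to Y$ of projective varieties with $\dim X=\dim Y+1$ and $f$ an \emph{automorphism}, the ZD-property for $(Y,g)$ implies it for $(X,f)$. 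With that lemma (plus \cite{JXZ23} for surface bases and Proposition \ref{birational}) the $\kappa(X)=-\infty$ case and every surface-base case go through; without it, nothing in your argument propagates a dense orbit upstairs.

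The gap is fatal in the $K_X\sim_\bQ 0$ case, where your division by $q(X)$ cannot work even after the tool is corrected. For $q(X)=1$ you fiber over the elliptic Albanese with \emph{surface} fibers, so $\dim X=\dim Y+2$ and even \cite[Lemma 5.1]{BGR17} is inapplicable; the paper's Lemma \ref{kappa=0} avoids this by invoking the quasi-\'etale splitting of \cite[Lemma 2.7]{HL21}, $\widetilde{X}\cong Z\times A$ with $Z$ weak Calabi--Yau and $\dim A=\widetilde{q}(X)$, and then projecting onto the \emph{surface} factor $Z$, so the fibers are the elliptic curve $A$ and the codimension-one transfer applies; Proposition \ref{birational} then descends the conclusion along the finite cover. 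Relatedly, your endgame ``$q(X)=0$ and $f$ primitive $\Rightarrow$ $X$ is weak Calabi--Yau'' is false as stated: weak Calabi--Yau requires the \emph{augmented} irregularity $\widetilde{q}(X)=0$ (together with canonical singularities and $K_X\sim 0$), which is strictly stronger than $q(X)=0$, so case (1) is only reached after replacing $X$ by the cover $Z$. Finally, in your imprimitive $q(X)=0$ subcase the canonical bundle formula does not give $K_Y\sim_\bQ 0$ on the base (only $K_Y$ plus discriminant and moduli parts is $\bQ$-trivial, and the base may well be $\bP^1$), and a curve base again has surface fibers; the paper instead disposes of this via Proposition \ref{threefold} and \cite[Theorem 3.4]{Li22}, and rules out the elliptic base outright because the weak Calabi--Yau cover has trivial Albanese --- which is exactly how primitivity in case (1) is forced.
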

\begin{proposition}
\label{q(X)>X-1}
Let $f$ be an automorphism of a normal projective variety $X$ with positive dimension.
Then ZDO is  true for $(X,f)$ if $q(X)\ge\dim X-1$.	
\end{proposition}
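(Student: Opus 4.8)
The plan is to run the argument of \cite[Proof of Theorems 1.4 and 1.6]{Li22} in the dynamical language of Conjecture~\ref{MSC}, using the Albanese morphism as the organizing tool. Since the conclusion of ZDO is automatic once $k(X)^f\neq k$, I would assume throughout that $k(X)^f=k$ and aim to produce a point whose $f$-orbit is Zariski dense. Write $A=\Alb(X)$, so that $\dim A=q(X)\ge \dim X-1$, and let $\alb\colon X\to A$ be the Albanese morphism. By functoriality of $\Alb$, the automorphism $f$ descends to an automorphism $g=T_b\circ h$ of $A$, where $h$ is a group automorphism and $T_b$ a translation, and $\alb\circ f=g\circ\alb$.

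First I would record the two facts that drive everything. On one hand, if $\alb$ is dominant then pulling back a nonconstant $g$-invariant rational function would contradict $k(X)^f=k$; hence $k(A)^g=k$, and the Zariski dense orbit conjecture for abelian varieties (Ghioca--Satriano--Scanlon, \cite{GS17,GS19}) supplies a point $y_0\in A(k)$ with $\overline{\mathcal{O}_g(y_0)}=A$. On the other hand, when $\alb$ is dominant its general fibre has dimension $\dim X-q(X)\le 1$, so the fibration is either generically finite or a fibration in curves. The generically finite case ($q(X)=\dim X$) is then immediate: choosing any $x_0\in\alb^{-1}(y_0)$, equivariance gives $\alb(\mathcal{O}_f(x_0))=\mathcal{O}_g(y_0)$, and since $\alb$ is proper the closed set $\alb(\overline{\mathcal{O}_f(x_0)})$ contains $\overline{\mathcal{O}_g(y_0)}=A$; thus $\overline{\mathcal{O}_f(x_0)}$ dominates $A$ and has dimension at least $\dim A=\dim X$, forcing $\overline{\mathcal{O}_f(x_0)}=X$.

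The remaining case, $q(X)=\dim X-1$ with $\alb$ a fibration in curves, is where the work lies. For $x\in\alb^{-1}(y_0)$ the orbit closure $Z_x:=\overline{\mathcal{O}_f(x)}$ is $f$-invariant and dominates $A$, so $\dim Z_x\in\{q(X),\,\dim X\}$; I want to find one $x$ with $\dim Z_x=\dim X$, i.e. $Z_x=X$. The obstruction is that $Z_x$ could be an $f$-invariant \emph{multisection} of $\alb$ (the case $\dim Z_x=q(X)$, so that $Z_x\to A$ is generically finite). The hard part will be to exclude this: an $f$-invariant multisection amounts, over the generic point of $A$, to a finite dynamically stable set of points on the generic fibre curve, and the plan is to show that such data produces a nonconstant element of $k(X)^f$ (for a rational-curve fibration, via a cross-ratio of invariant sections; for higher genus, via the relative Jacobian and a further application of \cite{GS17,GS19}), contradicting $k(X)^f=k$. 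Hence $Z_x=X$ for suitable $x$, completing this case.

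Finally, I would reduce the general situation to the dominant Albanese considered above. If $\alb$ is not dominant, its image $B=\alb(X)$ is a proper $g$-invariant subvariety generating $A$, and the general fibre of $X\to B$ could a priori have dimension $\ge 2$; but this degenerate configuration is exactly what the hypothesis $k(X)^f=k$ should rule out, by the same maximal rationally connected and Iitaka fibration bookkeeping that underlies Theorem~\ref{reduced-auto} and \cite{Li22}. I expect the technical heart of the whole argument to be this combination of (i) reducing to a surjective Albanese fibration with one-dimensional fibres and (ii) ruling out invariant multisections in the curve case; everything else is formal equivariance together with the known case of abelian varieties.
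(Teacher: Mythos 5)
Your skeleton is the same as the paper's: descend $f$ along the Albanese map, invoke ZDO for abelian varieties (Theorem \ref{Q-abelian}, i.e.\ \cite{GS17,GS19} via \cite{Xie22}), and split according to whether $\alb$ is generically finite ($q(X)=\dim X$) or a fibration with one-dimensional general fibres ($q(X)=\dim X-1$). Your generically finite case is correct (it is in effect the easy direction of Proposition \ref{birational}). However, the two steps you defer are exactly where all the content of the proposition lives, so the proposal has genuine gaps.

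The main gap is the case $q(X)=\dim X-1$: you only announce a \emph{plan} to rule out $f$-invariant multisections (cross-ratios of invariant sections for genus $0$, relative Jacobians for higher genus), and that plan already stumbles on the fact that a point $y_0$ with dense $g$-orbit is non-periodic, so $f$ maps the fibre over $y_0$ to the fibre over $g(y_0)$; there are no ``invariant sections'' of any single fibre to take cross-ratios of, and one must control the whole family of orbit closures $Z_x$, $x\in\alb^{-1}(y_0)$, simultaneously. The statement you are trying to re-prove is precisely \cite[Lemma 5.1]{BGR17}, which the paper records as Proposition \ref{X=Y+1} (for a surjective morphism with $\dim X=\dim Y+1$ and $f$ an automorphism, the ZD-property ascends from $(Y,g)$ to $(X,f)$); the paper's entire proof is then two citations, Proposition \ref{X=Y+1} plus Theorem \ref{Q-abelian}. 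Since Proposition \ref{X=Y+1} is stated in the paper itself, you could simply have quoted it. Note also that the natural mechanism in this codimension-one setting is quite different from your plan: the would-be multisections are \emph{divisors} in $X$, and producing infinitely many $f$-invariant prime divisors is what forces a nonconstant $f$-invariant rational function. The second gap is the reduction to surjective $\alb$: ``the same MRC and Iitaka fibration bookkeeping'' is not an argument, and the MRC fibration is not the relevant tool when $\alb(X)\subsetneq A$. The paper handles this reduction by citing \cite[Proposition 5.1]{CLO22} (or \cite[Proposition 3.7]{LM21}); the engine there is Ueno's theorem for subvarieties of abelian varieties: a proper subvariety generating $A$ admits an equivariant fibration onto a positive-dimensional variety of general type, whose automorphism group is finite, which yields a nonconstant invariant rational function for an iterate of $f$ and hence for $f$. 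So both of your ``expected'' steps need to be replaced by actual arguments or by the citations the paper uses.
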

{\bf Acknowledgment.}
I would like to thank Yohsuke Matsuzawa for constant conversations to complete the original version of this article.
I would  like to thank  Sheng Meng and Junyi Xie for numerous discussions, Long Wang and Jason Bell for useful comments,  and Meng Chen and De-Qi Zhang for their encouragement.
Finally, I am thankful to the referee for a careful reading of the article and the many suggestions.
\section{Preliminaries}\label{Preliminaries}
\begin{notation}
Let $X$ be a projective variety over an algebraically closed field $k$.
Denote by $\PEC(X)$ the closure of the set of classes  of effective $\bR$-Cartier divisors in $\NS_{\bR}(X)$.
We say a cone of  $\PEC(X)$ is {\it polyhedral} if it is generated by finitely many vectors.

Let $f$ be a dominant rational self-map of $X$.
Denote by $k(X)^f$ the field of $f$-invariant rational functions on $X$.
Let $X_f(k)$ be the set of $x\in X(k)$ whose orbit $\mathcal O_f(x)=\{x, f(x),\cdots\}$  is well-defined.
 We say that $x$ is periodic if there exists $n\in\mathbb N$ such that $f^n(\alpha)=\alpha$.
 We say a dominant rational map $\pi: X\dasharrow Y$ is {\it $f$-equivariant} if there is a dominant rational self-map $g$ of $Y$ such that $\pi\circ f=g\circ \pi$.
We say $f$ is {\it imprimitive} \cite{Zhang09-JDG} if there exists a $f$-equivariant map $\pi: X\dasharrow Y$ with $\dim X>\dim Y>0$.
The map $f$ is called {\it primitive} if it is not imprimitive.

It is well-known that the automorphism group scheme $\Aut_X$ of a projective variety $X$ is locally of finite order over $k$ and $\Aut(X)=\Aut_X(k)$.
The reduced neutral component $(\Aut_X^0)_{\red}$ of $\Aut_X$ is a smooth algebraic group over $k$ (cf \cite[\S7]{Brion17}).
Denote $(\Aut_X^0)_{\red}(k)$ by $\Aut_0(X)$.
\end{notation}
Now we recall  the definitions of ZD-property and strong ZD-property in \cite{Xie22}.
\begin{definition}
\cite[Definition 1.5]{Xie22}
We say that a  pair $(X,f)$ satisfies the \emph{ZD-property}, if either $k(X)^f\ne k$ or there exist a point $p\in X(k)$  whose orbit $\mathcal O_f(p)$ under $f$ is well defined and Zariski dense in $X$.
\end{definition}
\begin{definition}
\cite[Definition 1.6]{Xie22}
We say that a pair $(X,f)$ satisfies the \emph{strong ZD-property}, if either $k(X)^f\ne k$ or for every Zariski dense open subset $U$ of $X$, there exist a point $p\in X(k)$ whose orbit $\mathcal O_f(p)$ under $f$ is well-defined, contained in $U$ and Zariski dense in $X$.
\end{definition}
\begin{remark}
It is obvious that the strong ZD-property implies the ZD-property.	
\end{remark}
\begin{proposition}
\label{birational}
(cf. \cite[Proposition 2.2]{Xie22})
Let $\pi: X\dasharrow Y$ be a generically finite dominant rational map.
Let $f: X\to  X$ and $g: Y\to Y$ be surjective endomorphisms such that $g\circ\pi=\pi\circ f$.
Then $(X,f)$ satisfies the Strong ZD-property if and only if $(Y,g)$ satisfies the Strong ZD-property. 
\end{proposition}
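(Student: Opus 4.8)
The plan is to reduce the biconditional to two independent comparisons: an equivalence of the invariant–function–field conditions, and, under the assumption that both invariant fields are trivial, an equivalence of the ``dense orbit in every dense open'' conditions. The first alternative in the definition of the Strong ZD-property is then matched by the first comparison, and the second alternative by the second.

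For the function fields, identify $k(Y)$ with $\pi^*k(Y)\subseteq k(X)$, a finite extension since $\pi$ is generically finite; the relation $g\circ\pi=\pi\circ f$ translates into $f^*\circ\pi^*=\pi^*\circ g^*$, so $f^*$ carries $k(Y)$ into itself and restricts there to $g^*$. If $\psi\in k(Y)$ is nonconstant with $g^*\psi=\psi$, then $\pi^*\psi$ is nonconstant and $f^*$-invariant, giving $k(X)^f\ne k$. Conversely, given a nonconstant $\phi\in k(X)$ with $f^*\phi=\phi$, I would apply $f^*$ to the minimal polynomial $T^d+a_{d-1}T^{d-1}+\dots+a_0$ of $\phi$ over $k(Y)$: since $f^*$ fixes $\phi$ and sends $k(Y)$ to $k(Y)$ via $g^*$, the monic polynomial with $g^*$-images of the coefficients again annihilates $\phi$, so by uniqueness of the minimal polynomial each $a_i$ is $g^*$-fixed; as $k$ is algebraically closed and $\phi\notin k$, not all $a_i$ can be constant, whence $k(Y)^g\ne k$. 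Thus $k(X)^f\ne k\iff k(Y)^g\ne k$, and in that case both pairs satisfy the Strong ZD-property.

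Now assume $k(X)^f=k$ and $k(Y)^g=k$, so I must match the dense-orbit conditions. First spread $\pi$ out: there is a dense open $Y^\circ\subseteq Y$ with $X^\circ:=\pi^{-1}(Y^\circ)$ dense open in $X$ and $\pi|_{X^\circ}\colon X^\circ\to Y^\circ$ a finite surjective morphism; shrinking further I may assume the identity $\pi\circ f=g\circ\pi$ holds on the dense open $R:=\{x\in X^\circ: f(x)\in X^\circ,\ \pi(f(x))=g(\pi(x))\}$. Because $f$ and $g$ are morphisms, forward orbits are automatically well-defined, so the only content is containment in a prescribed open set together with density. The crucial observation is a dimension count: for any proper closed $W\subsetneq X$ one has $\dim\pi(W\cap X^\circ)\le\dim W\le\dim X-1=\dim Y-1$ (using $\dim X=\dim Y$), and since a finite morphism is closed, $\pi(W\cap X^\circ)$ is a proper closed subset of $Y^\circ$. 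Hence for any dense open $U\subseteq X$, setting $W=X\setminus(U\cap R)$, the set $V^\circ:=Y^\circ\setminus\overline{\pi(W\cap X^\circ)}$ is dense open in $Y$ and $\pi^{-1}(V^\circ)\cap X^\circ\subseteq U\cap R$; that is, \emph{every} preimage of a point of $V^\circ$ lies in $U$ and in the relation locus.

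For the lifting direction, given a dense open $U\subseteq X$ I form $V^\circ$ as above and apply the Strong ZD-property of $(Y,g)$ to obtain $q\in Y(k)$ with $\mathcal O_g(q)$ dense in $Y$ and contained in $V^\circ$. Choosing any $p\in\pi^{-1}(q)$ (nonempty by surjectivity), an induction using $R$ shows $f^n(p)\in\pi^{-1}(g^n q)\cap X^\circ\subseteq U\cap R$ for all $n$, so $\mathcal O_f(p)\subseteq U$ and $\pi(\mathcal O_f(p))=\mathcal O_g(q)$; density is then automatic, since for $Z=\overline{\mathcal O_f(p)}$ the image $\pi(Z)$ is dense in $Y$, forcing $\dim Z\ge\dim Y=\dim X$ and hence $Z=X$ by irreducibility. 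The descent direction is the easier dual: for a dense open $V\subseteq Y$ take $U=\pi^{-1}(V)\cap R$, produce $p$ with $\mathcal O_f(p)\subseteq U$ dense in $X$, and push forward to $q=\pi(p)$, whose orbit $\pi(\mathcal O_f(p))\subseteq V$ is dense in $Y$ because $\pi$ is dominant. The main obstacle to watch is the bookkeeping around the rational map $\pi$—keeping the entire forward orbit inside the locus where $\pi$ is a morphism and where the identity $\pi\circ f=g\circ\pi$ propagates—which is exactly what the dense open $V^\circ$, all of whose fibers land in the good locus, is engineered to guarantee; note that only the finiteness of $\pi$, and not its separability, enters, so the argument is insensitive to the characteristic.
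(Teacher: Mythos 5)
Your proposal is correct, and at the top level it has the same two-part skeleton as the paper's proof: first establish $k(X)^f\ne k\iff k(Y)^g\ne k$, then match the dense-orbit alternative using the fact that $\pi$ is generically finite (so $\dim X=\dim Y$ and a dimension count upgrades density of $\pi(\mathcal O_f(p))$ to density of $\mathcal O_f(p)$). But the execution is genuinely different, and more self-contained. The paper outsources both halves: it reduces to the case of a morphism by taking the graph and citing the birational invariance of the strong ZD-property from \cite[Proposition 2.2]{Xie22}, it cites \cite[Lemma 2.1]{Xie22} for the function-field equivalence, and it then disposes of the orbit comparison in a single line (``$\mathcal O_f(x)$ is dense iff $\mathcal O_g(\pi(x))$ is dense''). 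You instead prove the function-field equivalence directly --- the minimal-polynomial argument is sound: applying $f^*$ to the minimal polynomial of an $f^*$-invariant $\phi$ over $\pi^*k(Y)$ produces another monic annihilator of the same degree with coefficients in $\pi^*k(Y)$, uniqueness forces the coefficients to be $g^*$-fixed (injectivity of $\pi^*$), and the fact that $k$ is algebraically closed in $k(X)$ rules out all coefficients being constant --- and you handle $\pi$ as a rational map directly, spreading it out to a finite morphism $X^\circ\to Y^\circ$ and building, for each dense open $U\subseteq X$, the open $V^\circ\subseteq Y$ whose \emph{entire} $X^\circ$-fibers lie in $U\cap R$. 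This buys something the paper's write-up leaves implicit: the quantifier ``for every dense open $U$'' in the \emph{strong} ZD-property. The paper's closing sentence only verifies the density equivalence for a point and its image, which is the plain ZD statement; the containment-in-$U$ bookkeeping, and in particular the lifting direction (producing a point of $X$ whose whole forward orbit stays in $U$ from a point of $Y$ whose orbit stays in $V^\circ$), is exactly your $V^\circ$ device, and in the paper it is in effect delegated to the cited results of Xie. Your version also records that only finiteness, not separability, is used, so it is characteristic-free, whereas the paper works in characteristic zero. The trade-off is the obvious one: the paper's proof is four lines resting on external lemmas, while yours is longer but complete as written.
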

\begin{proof}
By taking the graph of $\pi$, it suffices for us to consider the case when $\pi$ is a generically finite surjective morphism since the strong ZD-property is invariant under birational conjugation by \cite[Proposition 2.2]{Xie22}.
By \cite[Lemma 2.1]{Xie22}, $k(X)^f\ne k$ if and only if $k(Y)^g\ne k$.
Now take a point $x\in X(k)$.
Then $\mathcal O_f(x)$ is Zariski dense in $X(k)$ if and only if $\mathcal O_g(\pi(x))$ is Zariski dense in $Y(k)$.
\end{proof}
A normal projective variety $X$ is said to be {\it $Q$-abelian} if there is a finite surjective morphism $\pi: A\to X$ \'etale in codimension 1 with $A$ being an abelian variety.
\begin{theorem}
\label{Q-abelian}
ZDO is true for  surjective endomorphisms $f$ of a  $Q$-abelian variety $X$.
\end{theorem}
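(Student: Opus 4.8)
The plan is to lift $f$ to the abelian covering of $X$ and then invoke the known case of ZDO for abelian varieties. By definition there is a finite surjective morphism $\pi\colon A\to X$, \'etale in codimension $1$, with $A$ an abelian variety. If I can produce a surjective endomorphism $\tilde f\colon A\to A$ with $\pi\circ\tilde f=f\circ\pi$, then Proposition \ref{birational}, applied with $\pi$ in place of the generically finite map, $\tilde f$ on $A$ and $f$ on $X$ (so that $f\circ\pi=\pi\circ\tilde f$), reduces the strong ZD-property for $(X,f)$ to the strong ZD-property for $(A,\tilde f)$. The latter is a dominant self-map of an abelian variety, for which ZDO is known by Ghioca--Satriano--Scanlon \cite{GS17,GS19}. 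Thus the whole theorem rests on the lifting of $f$.

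First I would record two reductions. Since $\pi$ is \'etale in codimension $1$ and $A$ is smooth, the ramification formula gives $\pi^*K_X=K_A\sim 0$, whence $K_X\sim_\bQ 0$. Applying the ramification formula to $f$ yields $K_X=f^*K_X+R_f$ with $R_f$ effective; as $f^*K_X\sim_\bQ 0$ we get $R_f\sim_\bQ 0$, and an effective $\bQ$-divisor that is $\bQ$-linearly trivial vanishes, so $R_f=0$ and $f$ is \emph{quasi-\'etale}. Next, replacing $A$ by a connected component of the Galois closure of $\pi$ --- which is again \'etale over the smooth $A$ by Zariski--Nagata purity, hence an abelian variety --- I may assume $\pi$ is Galois with group $G=\Gal(A/X)$, so $X=A/G$. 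Finally, the translations contained in $G$ generate a subgroup whose quotient of $A$ is again an abelian variety, isogenous to $A$ and \'etale in codimension $1$ over $X$; absorbing it, I may assume $G$ contains no nontrivial translation, so that the holonomy representation $G\hookrightarrow\GL(H_1(A,\bZ))$ is faithful.

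The crux is the lifting. Choose an open $X^\circ\subseteq X$ with complement of codimension $\ge 2$ over which $\pi$ and $f$ are \'etale and which $f$ maps into itself, and set $A^\circ=\pi^{-1}(X^\circ)$. Writing $\Gamma=\pi_1(X^\circ)$ and $L=\pi_1(A^\circ)$, the fact that $A^\circ$ is \'etale in codimension $1$ over the abelian $A$ gives $L\cong\pi_1(A)\cong\bZ^{2\dim A}$, and $\Gamma$ is the crystallographic extension $1\to L\to\Gamma\to G\to 1$ with faithful holonomy. The \'etale self-map $f$ induces an injective endomorphism $f_*\colon\Gamma\to\Gamma$ whose image has finite index $\deg f$, and a lift $\tilde f$ exists exactly when the pulled-back cover is isomorphic to $A^\circ$, i.e. when $f_*^{-1}(L)=L$. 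Since $f_*$ is injective, $f_*^{-1}(L)\cong f_*(\Gamma)\cap L$ is abelian of finite index in $\Gamma$; a Bieberbach-type argument using faithfulness of the holonomy shows that every finite-index abelian subgroup lies in the translation lattice, so $f_*^{-1}(L)\subseteq L$, and comparing indices through the injection $\Gamma/f_*^{-1}(L)\hookrightarrow\Gamma/L=G$ forces $f_*^{-1}(L)=L$. A choice of isomorphism then gives an \'etale $\tilde f\colon A^\circ\to A^\circ$ with $\pi\circ\tilde f=f\circ\pi$; as a rational map from the smooth $A$ to the abelian variety $A$ is automatically a morphism, $\tilde f$ extends to all of $A$, and it is surjective because $\pi\circ\tilde f=f\circ\pi$ is dominant.

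With the equivariant $\tilde f$ in hand, Proposition \ref{birational} shows that $(X,f)$ has the strong ZD-property as soon as $(A,\tilde f)$ does, and $\tilde f$, being a translate of an isogeny, is covered by the abelian case of \cite{GS17,GS19}; this proves ZDO for $(X,f)$. If one prefers to quote only ordinary ZDO for abelian varieties, it suffices to push a dense $\tilde f$-orbit forward by the dominant $\pi$ and to compare invariant functions via \cite[Lemma 2.1]{Xie22}; and had the lifting forced a passage to an iterate $f^s$, this would be harmless, since $k(X)^f=k$ if and only if $k(X)^{f^s}=k$ and a dense $f^s$-orbit is a fortiori a dense $f$-orbit. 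The one nonformal step is the lifting, concretely the claim $f_*^{-1}(L)=L$ --- that the translation lattice of the crystallographic group $\pi_1(X^\circ)$ is preserved by every injective, finite-index endomorphism arising from a quasi-\'etale self-map; everything else is bookkeeping once this is settled. Alternatively, one may simply cite the lifting of endomorphisms of $Q$-abelian varieties to their abelian covers in the framework of Nakayama--Zhang \cite{NZ09,NZ10}, and then proceed directly to the last paragraph.
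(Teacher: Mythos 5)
Your overall strategy coincides with the paper's: produce an equivariant lift of $f$ to the abelian cover $A$, invoke the abelian case, and descend. The differences are in how the two nontrivial inputs are handled. For the lifting, the paper simply cites \cite[Corollary 8.2]{CMZ20}, whereas you reprove it from scratch (quasi-\'etaleness of $f$ from $K_X\sim_\bQ 0$, passage to a Galois cover whose group contains no translations, and the crystallographic argument); your group-theoretic core is correct: faithfulness of the holonomy does force every finite-index abelian subgroup of $\Gamma$ into the lattice $L$, and the index comparison through $\Gamma/f_*^{-1}(L)\hookrightarrow \Gamma/L=G$ then gives $f_*^{-1}(L)=L$. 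For the abelian case, the paper quotes the \emph{strong} ZD-property from \cite[Theorem 1.14 and Corollary 3.31]{Xie22}, which is exactly what Proposition \ref{birational} as stated requires; you instead quote ordinary ZDO from \cite{GS17,GS19}, which does not literally plug into Proposition \ref{birational}, but your fallback --- push a dense orbit forward by the finite surjective $\pi$, and descend invariant functions via \cite[Lemma 2.1]{Xie22} --- is a correct (and in fact simpler) way to conclude, since only the downward direction of the equivalence is needed.

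One technical point in your lifting argument needs repair: you ask for a single open set $X^\circ$ with codimension $\ge 2$ complement, over which $\pi$ and $f$ are \'etale, \emph{and which $f$ maps into itself}. Such a set need not exist: enforcing $f(X^\circ)\subseteq X^\circ$ together with avoidance of the branch loci leads to the countable intersection $\bigcap_{n\ge 0}f^{-n}(U_0)$, which is in general not open. The fix is standard and leaves your argument intact: take $U_0=X_\reg\setminus\bigl(\mathrm{Branch}(f)\cup f(\Sing X)\bigr)$ and work with $U_0$ and $f^{-1}(U_0)$ as two different open sets. Both lie inside the smooth variety $X_\reg$ (over which $\pi$ is \'etale, since $G$ acts freely in codimension one) with codimension $\ge 2$ complements, so purity on $X_\reg$ identifies $\pi_1(U_0)\cong\pi_1(f^{-1}(U_0))\cong\pi_1(X_\reg)=\Gamma$, and the finite \'etale map $f\colon f^{-1}(U_0)\to U_0$ then induces the injective, finite-index endomorphism $f_*$ of $\Gamma$ that your argument requires; the rest goes through verbatim, with the lift first defined on $\pi^{-1}(f^{-1}(U_0))$ and then extended to $A$ because rational maps from a smooth variety to an abelian variety are morphisms. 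Alternatively, as you note, one may just cite the lifting from the literature, which is precisely what the paper does.
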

\begin{proof}
There exists a finite surjective morphism $\pi: A\to X$ with $A$ being an abelian variety, such that $f$ lifts to a surjective endomorphism $f_A: A\to A$ by \cite[Corollary 8.2]{CMZ20}.
Note that $(A,f_A)$  satisfies the strong ZD-property by \cite[Theorem 1.14 and Corollary 3.31]{Xie22}.
Therefore, ZDO is true for $(X, f)$ by Proposition \ref{birational}.
\end{proof}
\section{Proof of Theorem \ref{thm-bounded}}
\begin{theorem}
\cite[Theorem 2]{Rosenlicht56}
\label{thm-Rosenlicht}
Consider the action of an algebraic group $G$ on an irreducible algebraic variety $X$ defined over an algebraically closed field $k$ of characteristic 0.
Then exists a $G$-invariant dense open subvariety $X_0\subseteq X$ and a $G$-equivariant morphism $g: X_0 \to Z$ (where $G$ acts trivially on $Z$), with the following properties:
\begin{enumerate}
	\item for each $x\in X_0(k)$, the orbit $G\cdot x$ equals the fiber $g^{-1}(g(x))$, and 
	\item $g^*k(Z)=k(X)^G:=\{\psi\in k(X): \psi\circ h=\psi \text{ for each } h\in G \}$.
\end{enumerate}
In particular, if there is non nonconstant fibration fixed by $G$, then for each $x\in X_0(k)$, we have $G\cdot x=X_0$ is Zariski dense in $X$.
\end{theorem}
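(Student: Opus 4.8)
The plan is to construct the rational quotient of the $G$-action and then realise it by an honest morphism on a suitable invariant open set. Write $L=k(X)^G\subseteq k(X)$ and $n=\dim X$. First I would control the orbit dimensions: since the orbit $G\cdot x$ is locally closed of dimension $\dim G-\dim \Stab_G(x)$ (Chevalley), and $x\mapsto\dim\Stab_G(x)$ is upper semicontinuous, the orbit dimension is lower semicontinuous. Hence there is a $G$-invariant dense open $X^\circ\subseteq X$ on which every orbit has the maximal dimension $d$; replacing $X$ by $X^\circ$ I may assume all orbits have dimension $d$.

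The key numerical input is $\operatorname{trdeg}_k L=n-d$. The inequality $\le$ is the easy direction: if $\psi_1,\dots,\psi_m\in L$ are algebraically independent, the dominant rational map $\psi=(\psi_1,\dots,\psi_m)\colon X\dashrightarrow\mathbb A^m$ has generic fibre of dimension $n-m$, and because each $\psi_i$ is $G$-invariant this fibre is $G$-stable, hence a union of orbits; since it contains an orbit of dimension $d$ we get $n-m\ge d$, i.e.\ $m\le n-d$. The reverse inequality --- producing $n-d$ independent invariants --- is the heart of the argument and the step I expect to be the \emph{main obstacle}.

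For the hard direction I would show that finitely many invariants already separate generic orbits. Consider $\alpha\colon G\times X\to X\times X$, $(g,x)\mapsto(gx,x)$, whose image is constructible with irreducible closure $W$; over a generic $x$ the second projection $p_2$ has fibre $\overline{G\cdot x}\times\{x\}$, and $\psi\in k(X)$ lies in $L$ exactly when $p_1^*\psi=p_2^*\psi$ in $k(W)$. Using generic flatness and generic smoothness (here characteristic $0$ is used) one restricts to a dense open where the orbit map is well behaved, and then a Noetherian dimension-counting argument on the equivalence relation $W$ produces a finite tuple $f_1,\dots,f_r\in L$ for which the common level set $\{f_i=f_i(x)\}$ has $G\cdot x$ as an irreducible component through $x$ for generic $x$. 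This forces the generic fibre of $f=(f_1,\dots,f_r)\colon X\dashrightarrow\mathbb A^r$ to have dimension $d$, so its image has dimension $n-d$, giving $\operatorname{trdeg}_k L\ge n-d$; combined with the easy bound this pins $L$ down as the function field of that image and, in particular, shows $L$ is finitely generated over $k$.

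With $\operatorname{trdeg}_k L=n-d$ established I would take $Z$ to be a model of the field $L$ and let $g\colon X\dashrightarrow Z$ be the rational map induced by $L\hookrightarrow k(X)$; by construction $g^*k(Z)=L=k(X)^G$, which is property (2), and $g$ is $G$-equivariant for the trivial action on $Z$ since its component functions are invariant. The generic fibre of $g$ is irreducible of dimension $d$, $G$-stable, and contains an orbit of dimension $d$, hence is the closure of a single orbit. Finally I would pass to a $G$-invariant dense open $X_0$ on which $g$ is a morphism, all fibres are irreducible of dimension $d$, and every orbit is closed in its fibre; then $g^{-1}(g(x))=\overline{G\cdot x}\cap X_0=G\cdot x$ for $x\in X_0$, which is property (1). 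The ``In particular'' statement is then immediate: if there is no nonconstant $G$-fixed fibration then $L=k(X)^G=k$, so $Z$ is a point and (1) reads $G\cdot x=g^{-1}(g(x))=X_0$ for every $x\in X_0(k)$, a dense open of $X$.
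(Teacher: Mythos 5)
The paper contains no proof of this statement to compare against: it is quoted as a citation, \cite[Theorem 2]{Rosenlicht56} (in the dynamical formulation popularized by \cite{BGR17}), so your attempt has to stand on its own. Much of your scaffolding is sound: the semicontinuity reduction to a $G$-invariant open set where all orbits have the maximal dimension $d$, the easy inequality $\operatorname{trdeg}_k k(X)^G\le \dim X-d$ via $G$-stability of generic fibres of an invariant map, and the endgame (once the generic fibre is known to be an orbit closure, boundary points of $\overline{G\cdot x}$ lie in orbits of dimension $<d$ and so were already removed, whence $G\cdot x=g^{-1}(g(x))$ on a further shrinking $X_0$). You also correctly identify where the difficulty lies.

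However, at exactly that point the proposal has a genuine gap: the reverse inequality $\operatorname{trdeg}_k k(X)^G\ge \dim X-d$ is asserted, not proved. You say that ``a Noetherian dimension-counting argument on the equivalence relation $W$ produces a finite tuple $f_1,\dots,f_r\in L$'' whose common level sets cut out generic orbits --- but dimension counting on $W=\overline{\operatorname{Im}(G\times X\to X\times X)}$ cannot by itself \emph{produce} invariant functions; it can only estimate the transcendence degree of a field you already possess. A priori $k(X)^G$ could be $k$ even when $d<\dim X$, and ruling this out is precisely the content of Rosenlicht's theorem, so as written the step is circular. A correct mechanism is needed here: either Rosenlicht's original field-theoretic/cross-section argument, or the standard Chow-variety construction --- send a generic $x$ to the cycle $[\overline{G\cdot x}]$ in the Chow variety, let $Z$ be (the normalization of) the closure of the image, observe $k(Z)\subseteq k(X)^G$ since the map is constant on orbits, and note that generic fibres of $X\dashrightarrow Z$ are orbit closures of dimension $d$, so $\operatorname{trdeg}_k k(Z)=\dim X-d$; combined with your easy bound this forces $k(X)^G$ to be algebraic over $k(Z)$, and replacing $Z$ by its normalization in $k(X)^G$ yields $g^*k(Z)=k(X)^G$. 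A secondary, smaller omission: your claim that ``the generic fibre of $g$ is irreducible'' silently uses that $k(X)^G$ is algebraically closed in $k(X)$ (true in characteristic zero, by finiteness of the set of $L$-conjugates of any $\psi$ algebraic over $L=k(X)^G$ together with connectedness of $G^0$, with the finite component group handled separately); this should be stated, since it is also where the characteristic-zero hypothesis genuinely enters.
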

The following is a variant of Weil's regularization theorem (cf. \cite[Theorem 2.6]{Cantat14}).
\begin{theorem}
\label{thm-Weil-regular}
Let $f$ be a birational  self-map of a projective variety $X$.
If $f$ is of bounded degree, there exist a smooth projective variety $\widetilde{X}$ and a  birational map $\tau: \widetilde{X}\dasharrow X$ such that $\widetilde{f}=\tau^{-1}\circ f\circ \tau\in\Aut(\widetilde{X})$ and $\widetilde{f}^s\in\Aut_0(\widetilde{X})$ with some $s\in\bZ_{>0}$.
\end{theorem}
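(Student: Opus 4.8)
The plan is to regularize the cyclic group $\langle f\rangle\subseteq\Bir(X)$ by adapting Weil's regularization theorem, in the form cited as \cite[Theorem 2.6]{Cantat14}. First I would fix a resolution so that we may assume $X$ is smooth, together with a very ample divisor $H$; the hypothesis that $f$ is of bounded degree means $\sup_{n\ge 0}(f^{n*}H\cdot H^{\dim X-1})<\infty$. The opening step is to record that this uniform bound forces the graphs $\Gamma_n\subseteq X\times X$ of the iterates $f^n$ to have bounded degree with respect to $H\boxtimes H$, so that their classes all lie in a bounded subset of $\Chow(X\times X)$. Since such a bounded subset meets only finitely many irreducible components of the Chow variety, the family $\{\Gamma_n\}_{n\ge 0}$ is parametrized by a finite-type scheme.

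The second and central step is to upgrade this bounded family of correspondences into a biregular model. Composition of correspondences, which corresponds to addition of exponents on the $\Gamma_n$, is algebraic, so the Zariski closure $\overline{G}$ of $\{\Gamma_n\}$ inside the bounded family carries the structure of an algebraic group acting birationally on $X$ and containing the class of $f$. Applying Weil's regularization theorem to $\overline{G}$ — or, at the level of orbits, combining Rosenlicht's Theorem \ref{thm-Rosenlicht} with the properness of the Chow family — yields a projective variety $X'$ birational to $X$ on which $\overline{G}$, and in particular $f$, acts by biregular automorphisms. I expect this to be the main obstacle: one must verify that the group law extends algebraically over the boundary of the family and that the birational action can genuinely be made regular, which is precisely the technical core of Weil's theorem that the bounded-degree hypothesis is designed to supply.

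The third step removes singularities. Applying functorial (equivariant) resolution of singularities in characteristic zero to $X'$ produces a smooth projective variety $\widetilde{X}$ together with a birational morphism $\widetilde{X}\to X'$; by functoriality the $\overline{G}$-action lifts, so $\widetilde f=\tau^{-1}\circ f\circ\tau$ lies in $\Aut(\widetilde X)$, where $\tau\colon\widetilde X\dashrightarrow X$ is the resulting birational map.

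Finally, to obtain the power lying in the neutral component, I would use that $\overline{G}$ is an algebraic group of finite type, so its component group $\overline{G}/\overline{G}^{\,0}$ is finite. The element $g\in\overline{G}$ corresponding to $\widetilde f$ then satisfies $g^{\,s}\in\overline{G}^{\,0}$ for $s=|\overline{G}/\overline{G}^{\,0}|$; since $\overline{G}^{\,0}$ is a connected algebraic subgroup of the automorphism group scheme of $\widetilde X$, it is contained in $(\Aut_{\widetilde X}^0)_{\red}$, whence $\widetilde f^{\,s}\in\Aut_0(\widetilde X)$. As an alternative route to this last point, one can argue that the iterates $\widetilde f^{\,n*}$ form a bounded, hence finite, set of lattice automorphisms of $\NS(\widetilde X)$, so that $\widetilde f^{\,*}$ has finite order on $\NS_\bR(\widetilde X)$; a suitable power then acts trivially and thus lies in the kernel of $\Aut(\widetilde X)\to\GL(\NS(\widetilde X))$, which contains $\Aut_0(\widetilde X)$ as a finite-index subgroup, so a further power lands in $\Aut_0(\widetilde X)$.
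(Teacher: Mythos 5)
There is no argument in the paper for you to be measured against: Theorem \ref{thm-Weil-regular} is stated without proof, presented as a known variant of Weil's regularization theorem and quoted directly from \cite[Theorem 2.6]{Cantat14}. What you have written is, in effect, a reconstruction of the proof of that cited result, and its skeleton is the standard one from the literature: bounded degree forces the graphs $\Gamma_n$ of the iterates into a bounded (hence finite-type) part of $\Chow(X\times X)$; the closure of the cyclic group is an algebraic group acting rationally on $X$; Weil's theorem regularizes the action on a birational model; equivariant resolution gives smoothness; and finiteness of the component group puts a power of $\widetilde f$ into $\Aut_0(\widetilde X)$. Your alternative ending (pigeonhole on the finitely many classes $\widetilde f^{n*}[H]$ in $\NS(\widetilde X)$, then a Fujiki--Lieberman type theorem) is exactly the mechanism the paper itself uses in Proposition \ref{prop-zdp}, so it fits the paper's toolkit well.

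Three points in your sketch need more care before it stands as a proof. First, boundedness of $(f^{n*}H\cdot H^{\dim X-1})$ does not by itself bound the degree of $\Gamma_n$ with respect to $H\boxtimes H$: that degree involves all intermediate degrees $(f^{n*}H^k\cdot H^{\dim X-k})$, and you need the log-concavity (Khovanskii--Teissier/Dinh--Sibony) inequalities to conclude that bounded first degrees bound all of them. Second, to obtain a \emph{group} you should close up the set of all iterates $n\in\bZ$, not just $n\ge 0$; this is harmless because the degree of $f^{-n}$ is an intermediate degree of $f^n$, hence also bounded, but it should be said. Third, Weil's theorem produces a regular action on a birational model that need not be complete, so reaching a projective $X'$ requires an equivariant completion (e.g.\ Sumihiro) before equivariant resolution. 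None of these is fatal; but note that your central second step still delegates the real content---that the closure of $\{\Gamma_n\}$ is an algebraic group whose birational action can be made biregular---to Weil's theorem itself, which is legitimate here only because the statement being proved is precisely a quoted form of that theorem, exactly as the paper treats it.
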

\begin{proposition}
(cf. \cite[Proposition 6.2]{Xie15})
\label{prop-zdp}
Let $f$ be an automorphism of a projective variety $X$.
If $f^*D\equiv D$ for some big $\bR$-divisor $D$, then $f^s\in\Aut_0(X)$, and the set of periodic points is Zariski dense if and only if $f$ is of finite order.
\end{proposition}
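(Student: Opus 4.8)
The plan is to split the statement into two independent parts: first producing the iterate $f^s\in\Aut_0(X)$, and then analysing the periodic locus through the connected algebraic group $\Aut_0(X)$.

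For the first part I would exploit that $f^{*}$ preserves the pseudoeffective cone and fixes an interior point of it. Since $f$ is an automorphism, $f^{*}$ carries effective classes to effective classes bijectively, so $f^{*}\PEC(X)=\PEC(X)$; and $\PEC(X)$ is a closed salient cone with nonempty interior (salience follows from the duality between the pseudoeffective cone and the movable cone of curves, the latter spanning $N_1(X)$). As $D$ is big, its class lies in the interior of $\PEC(X)$ and is fixed by $f^{*}$. I would then equip $\NS_\bR(X)$ with the order-unit norm $\|w\|_{[D]}=\inf\{\lambda>0:\ \lambda[D]\pm w\in\PEC(X)\}$ attached to the interior point $[D]$; salience and closedness make this a genuine norm, and because $f^{*}$ preserves $\PEC(X)$ and fixes $[D]$ it is an \emph{isometry} for $\|\cdot\|_{[D]}$, so the iterates $(f^n)^{*}$ are uniformly bounded on $\NS_\bR(X)$ (equivalently, $f$ has bounded degree and $\delta_f=1$). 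Since they also preserve the lattice $\NS(X)$, the subgroup they generate is a bounded, hence finite, subgroup of $\GL(\NS(X))$, so $(f^{*})^{m}=\id$ on $\NS(X)$ for some $m$. Finally, by the finiteness theorem of Lieberman and Fujiki, $\Aut_0(X)$ has finite index in the kernel of $\Aut(X)\to\GL(\NS(X))$, so a further power $f^{s}$ lies in $\Aut_0(X)$.

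For the periodic points, the direction ``$f$ of finite order $\Rightarrow$ periodic points dense'' is immediate, since then every point is periodic. For the converse I would argue contrapositively: assuming $f$ has infinite order, I will show $\Per(f)$ is not dense. Put $g=f^{s}\in\Aut_0(X)$; then $g$ has infinite order, and every $f$-periodic point is $g$-periodic (if $f^{p}x=x$ then $g^{p}x=(f^{p})^{s}x=x$), so it suffices to show $\Per(g)$ is contained in a proper closed subset. Let $H=\overline{\langle g\rangle}$ be the Zariski closure, a commutative algebraic subgroup of $\Aut_0(X)$; since $g$ has infinite order, $\dim H\ge1$, so the identity component $H^{0}$ is a positive-dimensional connected group. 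For each $n\ge1$ the subgroup $\langle g^{n}\rangle$ has finite index in $\langle g\rangle$, whence $H^{0}\subseteq\overline{\langle g^{n}\rangle}$ and therefore $\Fix(g^{n})=X^{\overline{\langle g^{n}\rangle}}\subseteq X^{H^{0}}$. Consequently $\Per(g)=\bigcup_{n\ge1}\Fix(g^{n})\subseteq X^{H^{0}}$. But $\Aut_0(X)$ acts faithfully on $X$, so the nontrivial connected subgroup $H^{0}$ cannot fix all of $X$; hence $X^{H^{0}}$ is a proper closed subvariety and $\Per(g)$ is not Zariski dense, as desired.

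The main obstacle is the passage from ``acts trivially on $\NS(X)$'' to ``lies in $\Aut_0(X)$ after iteration'', that is, the finiteness of the kernel of the component group $\Aut(X)/\Aut_0(X)$ acting on $\NS(X)$: for smooth projective (or compact K\"ahler) $X$ this is the Lieberman--Fujiki theorem, while in the normal projective case one must invoke its extension (for instance via equivariant resolution together with \cite[\S7]{Brion17}). I would also emphasise that the reduction in the periodic-points part is routed through the connected group $H^{0}$ precisely to avoid the failure, over countable fields such as $\overline{\bQ}$, of the naive argument that a countable union $\bigcup_{n}\Fix(g^{n})$ of proper closed subsets cannot be dense.
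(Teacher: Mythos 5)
Your proof is correct, and its skeleton --- force an iterate of $f$ into $\Aut_0(X)$, then settle the periodic-point dichotomy inside that connected group --- is the same as the paper's; the difference is that the paper disposes of both steps by citation, while you supply arguments, and in doing so you actually weaken what must be quoted. The paper notes $f\in\Aut_{[D]}(X)$ and invokes a Fujiki--Lieberman type theorem valid for the \emph{big} class $[D]$ (\cite{Fujiki78,Lieberman75}, \cite[Theorem 1.4]{Li20}) to get $f^n\in\Aut_0(X)$ in one stroke, deduces $f^n|_{\NS_\bR(X)}=\id$, and then quotes \cite[Proposition 6.2]{Xie15} for the periodic points. You instead (i) use the order-unit norm of the fixed interior point $[D]$ of $\PEC(X)$ to show $\langle f^*\rangle$ is bounded, hence finite, in $\GL(\NS(X))$, so that only the classical kernel/ample-class form of Fujiki--Lieberman is needed; and (ii) reprove Xie's dichotomy directly via $\Per(f)\subseteq\Per(f^s)\subseteq X^{H^0}$ with $H=\overline{\langle f^s\rangle}\subseteq\Aut_0(X)$, which is exactly the mechanism behind Xie's proposition and the Rosenlicht-type Theorem \ref{thm-Rosenlicht} used elsewhere in the paper; your remark on routing through $H^0$ rather than a countable-union argument is precisely the right concern over $\overline{\bQ}$. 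The paper's route buys brevity; yours buys self-containedness and a weaker black box, though for normal (possibly singular) $X$ even the kernel version rests on the same kind of extension that \cite[Theorem 1.4]{Li20} supplies, so that dependence is reduced rather than removed. Two cosmetic points: salience of $\PEC(X)$ is standard and does not require BDPP duality, and when passing from boundedness in $\GL(\NS_\bR(X))$ to finiteness in $\GL(\NS(X))$ one should dispose of the (finite) torsion of $\NS(X)$ by a further iterate --- both are harmless.
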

\begin{proof}
Let $f^*D\equiv D$ for some big $\bR$-divisor $D$.
Then
$$
      f^s\in\Aut_{[D]}(X):=\{f\in \Aut(X)| f^*[D]=[D]\}.
$$
So there is an integer $n>0$ such that $f^n\in\Aut_0(X)$ by a Fujiki-Lieberman type theorem (cf. \cite{Fujiki78,Lieberman75}, see also \cite[Theorem 1.4]{Li20}).
So $f^n|_{\NS_{\bR(X)}}=\id$.
Then it follows from \cite[Proposition 6.2]{Xie15}.
\end{proof}
\begin{proof}[Proof of Theorem \ref{thm-bounded}]
By Theorem \ref{thm-Weil-regular} and Proposition \ref{birational}, we may assume that $f$ is an automorphism of a smooth projective variety $X$ and $f^s\in\Aut_0(X)$ with  some $s\in\bZ_{>0}$.
After replacing $f$ by a suitable iterate, we assume that $f\in\Aut_0(X)$ and  it is of infinite order.
Let $G$ be the Zariski closure of the cyclic group spanned by $f$ inside in $\Aut_0(X)$.
Then by Theorem \ref{thm-Rosenlicht}, if $f$ does not fix a nonconstant fibration, then there is $x\in X(k)$ such that $G\cdot x$ is dense in $X$, and therefore $\mathcal O_f(x)$ is Zariski dense in $X$ as well.

Now we assume that $f$ is an automorphism.
By Proposition \ref{prop-zdp}, if $f^*[D]=[D]$ for some big $\bR$-divisor $D$, then $f^s\in\Aut_0(X)$ (and so ZDO is true for $f$), and the set of periodic points is Zariski dense if and only if $f$ is of finite order.
Therefore, it suffices to show  that $f$ fixes a big divisor.

(1) Assume that $\rho(X)=1$.
It is well-known that $f^*D\equiv\delta_fD$ for some nef $\bR$-divisor $D$.
Then $D$ is big as $\rho(X)=1$.
Therefore,  $(f^*D)^{\dim X}=(\deg f)D^n=(\delta_f)^nD^n$.
This yields that $\deg f=(\delta_f)^{\dim X}$ as $D^n>0$.
Then $\delta_f=1$.
So $f$ fixes a big divisor.

(2) Assume that $X$ is Fano.
Note that $-K_X$ is ample and  $f^*(-K_X)\equiv(-K_X)$.
So $f$ fixes a big divisor.

(3) Assume that  $\PEC(X)$ is polyhedral.	
By \cite[Theorem 1.2(1)]{Li20}, we may assume that  all roots  of  the characteristic polynomial  of  $f^*|_{\NS_\bR(X)}$  are 1 after replacing $f$ by $f^s$ with $s\in\bZ_{>0}$.
Therefore, $f^*|_{\PEC(X)}=\id$ as $\PEC(X)$ is a polyhedral subcone of $\NS_{\bR}(X)$.
So $f^*$ fixes a big class $[D]$, i.e. $f^*[D]=[D]$.
\end{proof}
\section{Proof of Theorem \ref{reduced-auto} and Proposition \ref{q(X)>X-1}}
\label{projective 3-folds}
\begin{proposition}
\label{X=Y+1}
\cite[Lemma 5.1]{BGR17}
Let $\pi: X\to Y$ be a surjective morphism of projective varieties.
Let $f: X\to   X$ and $g: Y\to Y$ be surjective endomorphisms such that $g\circ\pi=\pi\circ f$.
Suppose $\dim X=\dim Y+1$, $f$ is an automorphism and $(Y,g)$ satisfies the  ZD-property.
Then $(X,f)$ satisfies the ZD-property.
\end{proposition}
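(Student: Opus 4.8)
The plan is to run the dichotomy that is built into the ZD-property. If $k(X)^f\neq k$ then $(X,f)$ satisfies the ZD-property by definition, so I may assume $k(X)^f=k$. The first genuine step is to push the hypothesis on $(Y,g)$ downstairs: since $g\circ\pi=\pi\circ f$ and $\pi$ is dominant, the pullback $\psi\mapsto\psi\circ\pi$ is injective and carries a nonconstant $g$-invariant function on $Y$ to a nonconstant $f$-invariant function on $X$. Hence $k(X)^f=k$ forces $k(Y)^g=k$, and the ZD-property of $(Y,g)$ then supplies a point $y_0\in Y(k)$ with $\mathcal O_g(y_0)$ Zariski dense in $Y$. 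Because $Y$ is positive-dimensional and $\mathcal O_g(y_0)$ is infinite, $y_0$ is not $g$-periodic; I will use this repeatedly, and I am free to replace $y_0$ by any tail $g^n(y_0)$.

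Next I would work over the fibre $\pi^{-1}(y_0)$. By the fibre-dimension theorem every nonempty fibre of $\pi$ has dimension $\ge\dim X-\dim Y=1$, so $\pi^{-1}(y_0)$ is infinite. For $x\in\pi^{-1}(y_0)$ the relation $\pi\circ f=g\circ\pi$ gives $\pi(\mathcal O_f(x))=\mathcal O_g(y_0)$, so the orbit closure $Z_x:=\overline{\mathcal O_f(x)}$ dominates $Y$; as $f$ is an automorphism, $f(Z_x)=Z_x$. If $Z_x\neq X$ then, $X$ being irreducible of dimension $\dim Y+1$, each component of $Z_x$ has dimension $\le\dim Y$, and since one component must dominate the irreducible $Y$ (finitely many closed sets cover $Y$) that component is a horizontal prime divisor; as $f$ permutes the finitely many components it is $f$-periodic. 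A short argument using that $y_0$ is not $g$-periodic (a vertical component of an $f$-invariant $Z_x$ maps to a $g$-periodic point) shows $x$ cannot lie on a vertical component, so $x$ itself lies on some proper $f$-periodic horizontal prime divisor. Let $\mathcal P$ denote the set of all such divisors. The problem becomes: if no point of $\pi^{-1}(y_0)$ has dense orbit, then $\pi^{-1}(y_0)\subseteq\bigcup_{D\in\mathcal P}D$, and I must derive a contradiction.

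If $\mathcal P$ is finite, I win by counting. Each $D\in\mathcal P$ is generically finite over $Y$, so the locus $B_D\subsetneq Y$ over which $\pi|_D$ has positive-dimensional fibres is a proper closed subset. Since $\mathcal O_g(y_0)$ is dense it is not contained in the finite union $\bigcup_D B_D$, so after replacing $y_0$ by a suitable $g^n(y_0)$ I may assume $y_0\notin\bigcup_D B_D$. Then each $D\cap\pi^{-1}(y_0)$ is finite, so their union is finite, yet it must contain the infinite set $\pi^{-1}(y_0)$ — a contradiction. Hence some $x\in\pi^{-1}(y_0)$ has Zariski dense orbit and $(X,f)$ satisfies the ZD-property.

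The remaining case, $\mathcal P$ infinite, is the \emph{main obstacle}, and here I would produce a nonconstant $f$-invariant rational function, contradicting $k(X)^f=k$. The first move is to send $\mathcal P$ to the finitely generated group $\NS(X)$ via $D\mapsto[D]$. If some algebraic-equivalence class is hit by infinitely many $D\in\mathcal P$, these all have equal degree against a fixed ample class, so they form a bounded — hence positive-dimensional — family of $f$-periodic effective divisors, from which I would extract an $f$-invariant pencil and thus a nonconstant $f$-invariant rational map $X\dasharrow\bP^1$. The genuinely delicate point is the complementary possibility that the classes $[D]$ are unbounded, so no single bounded family appears; I expect to handle this by passing to the generic fibre $X_\eta$, a curve over $k(Y)$, on which the members of $\mathcal P$ cut out infinitely many points periodic under the self-map induced by $f$. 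Infinitely many periodic points force that induced self-map to preserve a nonconstant function (it must be a finite-order automorphism, a translation on an elliptic fibre, or fix a pencil on a rational fibre), and such a fibrewise invariant spreads out to the sought $f$-invariant rational function on $X$. Ruling out the unbounded families without extra positivity hypotheses on $f^*|_{\NS_\bR(X)}$ is where the real work lies.
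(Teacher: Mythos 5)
The paper does not actually prove this proposition --- it is quoted verbatim as \cite[Lemma 5.1]{BGR17} --- so your attempt has to be measured against the argument in that reference. Your opening moves are sound and match the source: the pullback argument showing $k(X)^f=k$ forces $k(Y)^g=k$; the observation that the fibre over a dense-orbit point $y_0$ is infinite because $\dim X=\dim Y+1$; the verification that if $\overline{\mathcal O_f(x)}\neq X$ then $x$ lies on an $f$-periodic horizontal prime divisor (your parenthetical reason --- that a vertical component ``maps to a $g$-periodic point'' --- is imprecise, since a vertical component maps onto a proper closed subset of $Y$, not a point; the correct statement is that the $g$-orbit of $y_0$ would then lie in a finite union of proper closed subsets, contradicting density, which is evidently what you intend); and the counting contradiction when $\mathcal P$ is finite, including the legitimate replacement of $y_0$ by a tail iterate.

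The gap is exactly where you say it is, and it is genuine: the case where $\pi^{-1}(y_0)$ is covered by infinitely many $f$-periodic prime divisors is left open, and your sketch for it has two concrete defects. First, the fallback for unbounded N\'eron--Severi classes --- passing to the generic fibre $X_\eta$ and ``the self-map induced by $f$'' on it --- does not make sense: $f$ covers $g\neq\mathrm{id}_Y$, so it carries the fibre over $y$ to the fibre over $g(y)$ and induces no $k(Y)$-endomorphism of $X_\eta$ at all. Second, even your bounded-class subcase tacitly requires all divisors in play to be invariant under one \emph{fixed} iterate $f^N$ before any pencil/eigenfunction argument can produce an invariant function, and nothing in your setup supplies such a uniform $N$. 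What closes the argument in \cite{BGR17} is precisely this missing uniformity plus two standard facts: (i) the $p$-adic uniform-periodicity theorem of \cite{BGT15}, which for an automorphism (more generally an \'etale endomorphism) gives a single $N$ such that every $f$-periodic subvariety is $f^N$-invariant; (ii) the Darboux--Jouanolou-type principle that infinitely many $f^N$-invariant hypersurfaces force a nonconstant $f^N$-invariant rational function; and (iii) the elementary lemma (via symmetric functions of $\psi,\psi\circ f,\dots,\psi\circ f^{N-1}$) that $k(X)^{f^N}\neq k$ implies $k(X)^f\neq k$. With (i) in hand no case division on N\'eron--Severi classes is needed; without it, or some substitute for it, your infinite case cannot be completed, so the proposal as written does not prove the proposition.
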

\begin{proposition}
\label{threefold}
Let $\pi: X\to Y$ be a surjective morphism of normal projective varieties with $3=\dim X>\dim Y\ge1$.
Let $f: X\to  X$ and $g: Y\to  Y$ be  surjective endomorphisms such that $g\circ\pi=\pi\circ f$.
Suppose $f$ is an automorphism.
Then to show ZDO holds for $(X,f)$, we only to assume that $Y$ is $\bP^1$ or an elliptic curve.	
In particular, if $\kappa(X)\ge0$, then $Y$ is an elliptic curve.
\end{proposition}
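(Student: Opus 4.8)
The plan is to push the dynamics down to the base $Y$ via $g$, and then to analyze $Y$ according to its dimension and birational type, in each regime either proving ZDO outright or isolating the two allowed curve cases. First I would record two preliminary observations. Since $\pi$ is surjective and $g\circ\pi=\pi\circ f$ with $f$ bijective, $g$ is a surjective endomorphism, and $f$ carries each fibre $\pi^{-1}(y)$ isomorphically onto $\pi^{-1}(g(y))$; passing to the Stein factorization of $\pi$ (which is $f$-equivariant because $f$ permutes the connected components of fibres) and comparing fibres over a general point forces $\deg g=1$, so in fact $g\in\Aut(Y)$. Secondly, if $g$ has finite order then $Y/\langle g\rangle$ is again a curve, so $k(Y)^{g}\ne k$ and $\pi^{*}k(Y)^{g}\subseteq k(X)^{f}$ supplies nonconstant $f$-invariant functions; thus ZDO holds trivially in that case, and only $g$ of infinite order requires genuine work.

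Next I would dispose of $\dim Y=2$. Here $Y$ is a normal projective surface and $g$ a surjective endomorphism, so $(Y,g)$ satisfies ZDO by \cite{JXZ23}. Since $\dim X=3=\dim Y+1$ and $f$ is an automorphism, Proposition \ref{X=Y+1} lifts this to the ZD-property for $(X,f)$. Hence one may assume $\dim Y=1$, i.e. $Y$ is a smooth projective curve. If the genus of $Y$ is at least $2$, then Riemann--Hurwitz forces $g$ to be an automorphism of finite order, and the finite-order dichotomy above already yields ZDO. This leaves exactly $Y\cong\bP^{1}$ or $Y$ an elliptic curve, which proves the first assertion.

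For the \emph{in particular} statement I would assume $\kappa(X)\ge 0$ and argue that $Y\cong\bP^{1}$ is impossible. By the reductions above we may take $g$ of infinite order in $\Aut(\bP^{1})=\mathrm{PGL}_{2}$. The locus $S\subset\bP^{1}$ over which $\pi$ fails to have a general (smooth, connected) fibre is a proper closed, hence finite, and $g$-invariant subset; since a nontrivial element of $\mathrm{PGL}_{2}$ has at most two fixed points and a finite invariant set lies in the fixed locus of a suitable power of $g$, we get $|S|\le 2$. I would then combine this with the canonical bundle formula $K_{X}\sim_{\bQ}\pi^{*}(K_{Y}+B+M)$: because $\kappa(X)\ge 0$ (indeed $K_X\sim_\bQ 0$ in the intended application) and $\deg K_{\bP^{1}}=-2$, the $g$-invariant effective part $B+M$ must have positive degree yet be supported on at most two points. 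This forces a rigidity/isotriviality dichotomy — a non-isotrivial family over $\bP^{1}$ cannot concentrate its degeneration (equivalently its moduli part) on so few points, while an isotrivial family over $\bP^{1}$ is, up to finite base change and quotient, birational to $F\times\bP^{1}$ and therefore has $\kappa(X)=\kappa(F)+\kappa(\bP^{1})=-\infty$. Either alternative contradicts $\kappa(X)\ge 0$, so $Y$ must be elliptic.

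The clean part of the proof is the chain of reductions through $\dim Y=2$ and through genus $\ge 2$; the real obstacle is this last step, namely excluding $Y\cong\bP^{1}$ when $\kappa(X)\ge 0$. The delicate point is to turn the dynamical constraint $|S|\le 2$ into a contradiction with positivity of the relative canonical bundle. I expect the decisive input to be that the moduli part $M$ of the canonical bundle formula cannot be supported on at most two points unless the fibration is isotrivial, and here I anticipate needing the structural results on equivariant fibrations and Kodaira dimension from the framework of \cite{Nakayama10,NZ09,NZ10,Zhang16} together with the argument of \cite{Li22}.
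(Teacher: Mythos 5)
Your reduction steps coincide with the paper's own proof: the case $\dim Y=2$ is handled exactly as in the paper via \cite[Theorem 1.9]{JXZ23} and Proposition \ref{X=Y+1}, and the genus $\ge 2$ case via finiteness of the automorphism group. (The paper phrases the latter as the absence of a dense $g$-orbit when $\kappa(Y)>0$; your justification that $g$ then has finite order, so $\pi^{*}k(Y)^{g}\subseteq k(X)^{f}$ gives nonconstant invariant functions, is the correct way to make that remark precise, and your observation that $g$ is automatically an automorphism is fine.) Up to this point your argument is correct and essentially identical to the paper's.

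The problem is the \emph{in particular} statement. The paper does not prove it at all: it is quoted from \cite[Theorem 3.4]{Li22}. Your attempted substitute contains a genuinely false step, namely the isotrivial branch of your final dichotomy. An isotrivial fibration over $\bP^{1}$ is \emph{not}, up to finite base change and quotient, birational to $F\times\bP^{1}$, and it need not have $\kappa=-\infty$: a bielliptic surface $B=(E\times E')/G$ carries an isotrivial elliptic fibration $B\to E'/G\cong\bP^{1}$ and has $\kappa(B)=0$; taking $X=B\times E''$ for an elliptic curve $E''$ gives a threefold with $\kappa(X)=0$ fibred isotrivially over $\bP^{1}$. The point is that isotriviality only yields a trivialization after a finite base change $C\to\bP^{1}$ which is in general \emph{branched} (over at least three points in the bielliptic example), so $C$ need not be rational and the quotient $(F\times C)/G$ can have nonnegative Kodaira dimension. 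This example does not contradict the proposition --- there the multiple fibres sit over at least three points, which is incompatible with your bound $|S|\le 2$ --- but that is precisely the argument your proposal omits: the dynamical constraint must be fed into the isotrivial case as well, rather than disposing of it by the (false) claim $\kappa=\kappa(F)+\kappa(\bP^{1})$. Two further caveats: the canonical bundle formula $K_{X}\sim_{\bQ}\pi^{*}(K_{Y}+B+M)$ is only available for fibrations with $K_{X}$ relatively $\bQ$-trivial, which is not implied by $\kappa(X)\ge 0$, so for the statement as written you would first need to pass to a relative Iitaka fibration or argue differently; and your non-isotrivial branch rests on Viehweg--Zuo type hyperbolicity of the base, a far heavier input than anything the paper uses, with its own hypotheses on the fibres. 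So the last assertion is where your proof has a real hole, whereas the paper's ``proof'' of it is simply a citation.
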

\begin{proof}
When $\dim Y=2$, then ZDO is true for $(Y,g)$ by \cite[Theorem 1.9]{JXZ23}.
Then ZDO is true for $(X,f)$ by Proposition \ref{X=Y+1}.
Then we may assume that $\dim Y=1$.
It is well-known that $Y$ does  not have any dense $g$-orbit if $g(Y)\ge2$, i.e. $\kappa(Y)>0$.
Therefore, $Y$ is $\bP^1$ or an elliptic curve.
If $\kappa(X)\ge0$, then we end  the proof of Proposition \ref{threefold} by \cite[Theorem 3.4]{Li22}.
\end{proof}
\begin{proof}[Proof of Proposition \ref{q(X)>X-1}]
By \cite[Proposition 5.1]{CLO22} (or \cite[Proposition 3.7]{LM21}), we may assume that the Albanese map $\alb: X \to E$ is surjective.
By taking the graph of $\alb$, it suffices to consider the case $\alb$ is morphism by Proposition \ref{birational}.
 Notice that $f$ descents to a surjective endomorphism $f_A$ of $A$ by the universal of the Albanese morphism.
If $\dim A=\dim X$, then Conjecture \ref{MSC} is true for $(X,f)$ by Proposition \ref{birational} and Theorem \ref{Q-abelian}.
If $\dim A=\dim X-1$, then  Conjecture \ref{MSC} is true for $(X,f)$ by Proposition \ref{X=Y+1}  and Theorem \ref{Q-abelian}.
\end{proof}
We recall the definition of a weak Calabi-Yau variety in \cite[Definition 2.4]{HL21}.
\begin{definition}\label{defn-wcy}
A normal projective variety $X$ is called a \emph{weak Calabi-Yau variety}, if
\begin{itemize}
\item $X$ has only canoncial singularities;
\item the canonical divisor $K_X\sim 0$; and
\item the \emph{augemented irregularity} $\widetilde{q}(X)=0$.
\end{itemize}
Here, the augumented irregularity $\widetilde{q}(X)$ of $X$ is defined as the supremum of $q(Y)$ of all normal projective varieties $Y$ with finite surjective morphism $Y\to X$, \'etale in codimension one. 
Namely,
$$\widetilde{q}(X)=\sup\big\{ q(Y) : Y\to X \text{ is  a finite surjective and \'etale in codimension one}\big\}.$$
\end{definition}
\begin{proposition}
\cite[Proposition 4.14]{Nakayama10}
\label{Chow-red}
Let $\pi:X\dasharrow Y$ be a dominant rational map from a projective variety $X$ to a normal projective variety $Y$.
Then there exists a normal projective variety  $T$ and a birational map $\mu: Y\dasharrow T$ satisfying the following conditions:
\begin{itemize}
\item[(1)] The graph $\gamma_{\mu\circ\pi}:\Gamma_{\mu\circ\pi}\to T$ of $\mu\circ \pi$ is equi-dimensional.
\item[(2)] Let $\mu': Y\dasharrow T'$ be a birational map to another normal projective variety $T'$ such that the graph $\gamma_{\mu'\circ\pi}:\Gamma_{\mu'\circ\pi}\to T'$ of $\mu'\circ\pi$ is equi-dimensional.
Then there exists a birational morphism $\nu: T'\to T$ such that $\mu=\nu\circ \mu'$.
\end{itemize}
\end{proposition}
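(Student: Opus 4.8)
The plan is to produce $T$ as a \emph{Chow reduction} of $\pi$, that is, to realise it inside the Chow variety of $X$. Put $d=\dim X-\dim Y$ and let $\Gamma_\pi\subseteq X\times Y$ be the graph of $\pi$, with projections $p\colon\Gamma_\pi\to X$ (birational) and $q\colon\Gamma_\pi\to Y$ (surjective, with general fibre of dimension $d$). Sending a general point $y\in Y$ to the cycle $\bigl[\,\overline{\pi^{-1}(y)}\,\bigr]$ defines a rational map $\phi\colon Y\dasharrow\Chow_d(X)$ into the Chow variety of effective $d$-cycles on $X$. I take $T$ to be the normalisation of the closure $\overline{\phi(Y)}=:Z$, and let $\mu\colon Y\dasharrow T$ be the induced map. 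Since the fibres of $q$ over distinct points of $Y$ are disjoint, $\phi$ is generically injective, hence birational onto $Z$; as $T\to Z$ is the (finite, birational) normalisation, $\mu$ is birational. Note that $T\to Z\hookrightarrow\Chow_d(X)$ is a genuine morphism.

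For existence and condition (1), pull the universal cycle over $\Chow_d(X)$ back along $T\to\Chow_d(X)$ to obtain a family $\mathcal U_T\to T$ all of whose fibres are supports of $d$-cycles, hence of pure dimension $d$. Over the generic point of $T$ the fibre of the graph $\Gamma_{\mu\circ\pi}$ agrees with the support of the universal cycle, so taking closures inside $X\times T$ gives an inclusion $\Gamma_{\mu\circ\pi}\subseteq\mathcal U_T$. Consequently every fibre of $\gamma_{\mu\circ\pi}\colon\Gamma_{\mu\circ\pi}\to T$ is contained in the corresponding fibre of $\mathcal U_T\to T$ and so has dimension at most $d$; since fibre dimension is upper semicontinuous and equals $d$ generically, every fibre has dimension exactly $d$. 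Thus $\gamma_{\mu\circ\pi}$ is equidimensional, which is (1).

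For the universal property (2), suppose $\mu'\colon Y\dasharrow T'$ is a birational map to a normal projective variety with $\gamma_{\mu'\circ\pi}\colon\Gamma_{\mu'\circ\pi}\to T'$ equidimensional. Its fibres are effective $d$-cycles on $X$, so the family defines a classifying morphism $\psi\colon T'\to\Chow_d(X)$; here normality of $T'$ is what upgrades the a priori rational classifying map to a morphism. Because $T'$ is birational to $Y$, the map $\psi$ agrees generically with $\phi$, so its image is $Z$. As $T'$ is normal and $T\to Z$ is the normalisation, the universal property of normalisation factors $\psi$ through a morphism $\nu\colon T'\to T$; comparing on the dense open locus where $\mu$ and $\mu'$ are isomorphisms yields $\mu=\nu\circ\mu'$.

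The crux --- and the step I expect to be the main obstacle --- is the cycle-theoretic input from the Chow variety used twice above: that the universal family over $\Chow_d(X)$ is equidimensional (for (1)), and that an equidimensional family of $d$-cycles over a \emph{normal} base is classified by an honest morphism to $\Chow_d(X)$ (for (2)). Both reflect the delicate representability of the Chow functor: the Chow variety only parametrises cycle families well over seminormal bases, one must work with cycle-theoretic rather than scheme-theoretic fibres (flatness can genuinely fail), and the passage from the generic fibre to all fibres has to be controlled --- this is the content developed by Barlet analytically and by Koll\'ar algebraically. For existence alone one could instead flatten $q\colon\Gamma_\pi\to Y$ by the theorem of Raynaud and Gruson, obtaining a modification over which the family becomes flat and hence equidimensional; but that modification is not canonical, so the minimality assertion (2) still has to be extracted from the Chow-variety model.
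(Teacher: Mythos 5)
The paper offers no proof of this proposition at all---it is quoted verbatim from Nakayama \cite[Proposition 4.14]{Nakayama10}---and your construction (send a general $y\in Y$ to the cycle $[\,\overline{\pi^{-1}(y)}\,]\in\Chow_d(X)$, take $T$ to be the normalization of the closure of the image, and use the incidence family for (1) and the classifying morphism over a normal base for (2)) is exactly the Chow-variety construction underlying Nakayama's term ``Chow reduction,'' with the two technical inputs you isolate correctly attributed to the Barlet--Koll\'ar theory of cycle families. One small repair: in your proof of (1), the lower bound $\dim\ge d$ on every fibre is not given by ``upper semicontinuity'' but by the standard dimension theorem for a surjective morphism of irreducible projective varieties (every component of every nonempty fibre of $\gamma_{\mu\circ\pi}$ has dimension at least $\dim\Gamma_{\mu\circ\pi}-\dim T=d$); combined with your upper bound from the containment in the incidence family, this gives equidimensionality as claimed.
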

We call the composition $\mu\circ\pi: X\dasharrow T$ above satisfying Proposition \ref{Chow-red} (1)-(2) the \emph{Chow reduction} of $\pi: X\dasharrow Y$, which is unique up to isomorphism.
\begin{lemma}\label{kappa=0}
Let $f$ be an automorphism of a normal projective  threefold $X$ with $K_X\sim_\bQ0$ and only klt singularities.
Then we may reduce ZDO for $(X,f)$ to the case that $f$ is a primitive automorphism of a weak Calabi-Yau threefold.
\end{lemma}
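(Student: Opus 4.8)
The plan is to follow, step for step, the reduction used for the Kawaguchi--Silverman conjecture in \cite{Li22}, feeding in the ZDO transfer statements recorded above in place of the canonical-height inputs. The backbone is: normalize the singularities and the canonical class, apply the Beauville--Bogomolov type decomposition for klt varieties with numerically trivial canonical class, and then dispose of the resulting product pieces together with the question of primitivity. Throughout I would transfer ZDO along finite covers by Proposition \ref{birational}, and I would freely replace $f$ by an iterate $f^m$: indeed $k(X)^f=k$ if and only if $k(X)^{f^m}=k$ (symmetrizing an $f^m$-invariant $\psi$ over $\psi,\psi\circ f,\dots,\psi\circ f^{m-1}$ yields $f$-invariant functions, which are constant and force $\psi$ to take finitely many values), and a dense $f$-orbit is a finite union of $f^m$-orbits, one of which is dense. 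First I would pass to canonical singularities with $K_X\sim 0$: since $X$ is klt with $K_X\sim_\bQ 0$, some $mK_X\sim 0$ defines an index-one cover $\pi\colon X'\to X$ étale in codimension one with $K_{X'}\sim 0$ Cartier, hence $X'$ is klt and Gorenstein, thus canonical; as $f^*K_X\sim K_X$, after replacing $f$ by an iterate it lifts to an automorphism $f'$ of $X'$, and ZDO for $(X,f)$ reduces to ZDO for $(X',f')$ by Proposition \ref{birational}. I would then invoke the Beauville--Bogomolov decomposition: there is a quasi-étale cover $\widetilde X\to X'$ with $\widetilde X\cong A\times W$, where $A$ is abelian of dimension $\widetilde q(\widetilde X)=\widetilde q(X)$ and, after a further index-one cover of each factor, $W$ is a product of weak Calabi--Yau factors in the sense of Definition \ref{defn-wcy}; again $f$ lifts after an iterate and preserves the decomposition. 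In dimension three only $\dim A\in\{0,1,3\}$ can occur, since $\dim A=2$ would force a one-dimensional Calabi--Yau factor with $\widetilde q=0$, which is impossible because every curve with $K\sim 0$ is elliptic.

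Next I would dispatch the two non-target cases directly. If $\dim A=3$ then $\widetilde X=A$ is $Q$-abelian and ZDO holds by Theorem \ref{Q-abelian}. If $\dim A=1$ then $\widetilde X\cong E\times S$ with $E$ elliptic and $S$ a K3 surface; since $H^0(S,T_S)\cong H^0(S,\Omega^1_S)=0$, the group $\Aut(S)$ is discrete, so the family over the connected base $E$ of fibrewise isomorphisms induced by $f$ is constant and $f=f_E\times f_S$ respects both projections. Then $\mathrm{pr}_S\colon E\times S\to S$ is $f$-equivariant with $\dim(E\times S)=\dim S+1$, and $(S,f_S)$ satisfies ZDO by the surface case \cite[Corollary 3.33]{Xie22}; hence $(E\times S,f)$ satisfies ZDO by Proposition \ref{X=Y+1}, and this descends to $(X,f)$ by Proposition \ref{birational}.

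It remains to treat $\dim A=0$, i.e.\ $\widetilde X=W$ a weak Calabi--Yau threefold with $\kappa(W)=0$ and $q(W)=0$. If $f$ is primitive this is precisely the target case (1). If $f$ is imprimitive there is an $f$-equivariant dominant rational map $W\dashrightarrow Y$ with $0<\dim Y<3$; replacing it by its Chow reduction (Proposition \ref{Chow-red}) and passing to the graph, I may take it to be a morphism with $g$ an automorphism of the base. If $\dim Y=2$, then $(Y,g)$ satisfies ZDO by the surface case and Proposition \ref{X=Y+1} gives ZDO for $(W,f)$. If $\dim Y=1$, then by Proposition \ref{threefold} the base is $\bP^1$ or an elliptic curve, and since $\kappa(W)\ge 0$ it must be elliptic; but a surjection from $W$ to an elliptic curve factors through the Albanese and contradicts $q(W)=0$, so this case does not arise. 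Thus the only surviving case is a primitive automorphism of a weak Calabi--Yau threefold, as claimed.

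The main obstacle is structural rather than bookkeeping: one must have the Beauville--Bogomolov decomposition available for klt threefolds with $K\equiv 0$ and must control how $f$ interacts with it, namely that after an iterate $f$ lifts to each quasi-étale cover and respects the product. The decisive point is the rigidity of the K3 factor (no global vector fields, so $\Aut(S)$ is discrete), which splits the action on $E\times S$; the rigidity of the abelian (Albanese) factor is automatic by functoriality. A minor caveat is that Proposition \ref{birational} is stated for the strong ZD-property, but the resolved cases ($Q$-abelian and surface bases) in fact yield the strong ZD-property, and this implies the ZD-property, so the descent to $(X,f)$ is valid.
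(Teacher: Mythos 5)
Your proposal is correct and follows essentially the same route as the paper: decompose $X$ (up to quasi-\'etale cover) as abelian variety $\times$ weak Calabi--Yau factor, kill the cases $\dim A=3$ and $\dim A=1$ via Theorem \ref{Q-abelian} and the surface ZDO result combined with Proposition \ref{X=Y+1}, and in the remaining weak Calabi--Yau threefold case rule out imprimitivity via Chow reduction, Proposition \ref{threefold}, and the triviality of the Albanese. The only difference is presentational: the paper obtains the equivariant splitting $\widetilde{X}\cong Z\times A$, $\widetilde{f}=\widetilde{f}_Z\times\widetilde{f}_A$ in one stroke from \cite[Lemma 2.7]{HL21}, whereas you reconstruct it by hand (index-one cover, Beauville--Bogomolov decomposition, rigidity of the K3 factor).
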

\begin{proof}
By \cite[Lemma 2.7]{HL21}, there exists a finite surjective morphism   $\pi: \widetilde{X} \to X$ and an automorphism $\widetilde{f}$ of $\widetilde{X}$ such that the following statements hold.
\begin{itemize}
	\item $X\cong Z\times A$ for a weak Calabi-Yau variety $Z$ and an abelian variety $A$.
	\item $\dim A=\widetilde{q}(X)$.
	\item There are automorphisms $\widetilde{f}_Z$ and $\widetilde{f}_A$ of $Z$ and $A$ respectively, such that the following diagram commutes:
$$
\xymatrix{
X  \ar[d]_f  &    \widetilde{X} \ar[l]_{\pi}  \ar[r]^{\cong}  \ar[d]_{\widetilde{f}} & Z\times A \ar[d]^{\widetilde{f}_Z\times \widetilde{f}_A}\\
X   &  \widetilde{X} \ar[r]^{\cong}  \ar[l]_{\pi} & Z\times A.\\
}
$$
\end{itemize}
If $\widetilde{q}(X)=3$, then it follows from Theorem \ref{Q-abelian} and Proposition \ref{birational}.
If $\dim Z>0$, then $\dim Z\ge2$ since  $\widetilde{q}(Z)=0$ and $K_Z\sim0$.
If $\dim Z=2$, then $\dim A=1$.
Consider the natural projection $\mathrm{pr}_1: \widetilde{X}\to Z$.
Then Propositions   \ref{birational} and \ref{X=Y+1} implies  ZDO for $(X,f)$.
Now assume that $\dim Z=3$ and  $f$ is imprimitive.
Then there is a rational map $\pi: X\dasharrow Y$ and a birational map $g: Y\dasharrow Y$ such that $\pi\circ f=g\circ \pi$.
By Proposition \ref{Chow-red}, there exists a birational morphism $\mu: Y\to Z$ such that $\pi'=\mu\circ \pi: X\dasharrow Z$ is the Chow reduction of $\pi$.
Then $f$ descents to  an automorphism $h$ of $Z$ by Proposition \ref{Chow-red}.
 By taking the graph of $\pi'$, it suffices to consider the case when $\pi'$ is morphism by Proposition  \ref{birational}.
By Proposition \ref{threefold}, we may assume that  $Y$ is  an elliptic curve.
This completes the proof  of  Lemma \ref{kappa=0} as $X$ has trivial Albanese.
\end{proof}
The following {\it special MRC fibration} is due to Nakayama \cite{Nakayama10}.
\begin{definition}
\cite[Definition 2.10]{HL21}
Given	a projective variety $X$, a dominant rational map $\pi: X\dashrightarrow Z$ is called the  special MRC fibration of $X$, if it satisfies the following conditions:
\begin{itemize}
\item[(1)] The graph $\Gamma_\pi\subseteq X\times Z$ of $\pi$ is equidimensional over $Z$.
\item[(2)] The general fibers of $\Gamma_\pi\to Z$ are rationally connected.
\item[(3)] $Z$ is a non-uniruled normal projective variety.
\item[(4)] If $\pi':X\dasharrow Z'$ is a dominant rational map satisfying (1)-(3), then there is a birational morphism $v: Z'\to Z$ such that $\pi=v\circ\pi'$.
\end{itemize}
\end{definition}
The existence and the uniqueness (up to isomorphism) of the special MRC fibration is proved in \cite[Theorem 4.18]{Nakayama10}.
One of the crucial advantages of the special MRC is the following descent property.
\begin{theorem}\label{descent}
\cite[Lemma 2.11]{HL21}
Let $\pi: X\dasharrow Z$ be the special MRC fibration, and $f\in\Aut(X)$.
Then there exists a  birational morphism $p: W\to X$ and an automorphism $f_W\in\Aut(W)$ and an equidimensional surjective morphism $q: W\to Z$ satisfying the following conditions:
\begin{itemize}
\item[(1)]  $W$ is a normal projective variety.
\item[(2)] A general fiber of $q$ is rationally connected.
\item[(3)] $W$ admits $f_W$-equivariant fibration over $X$ and $Z$.	
\end{itemize}
\end{theorem}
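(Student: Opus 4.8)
The plan is to exploit the canonicity of the special MRC fibration, exactly as one does for other functorial fibrations such as the Albanese or Iitaka fibration, and to carry it out in two stages: first descend $f$ to an automorphism $g$ of the base $Z$, and then realize $W$ as an equivariant model built from the graph of $\pi$. The whole argument is purely formal once the universal property (4) in the definition of the special MRC fibration is in hand, and the only point where the automorphism hypothesis is genuinely used is in checking equidimensionality.

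First I would establish the descent: there is $g\in\Aut(Z)$ with $\pi\circ f=g\circ\pi$. Consider the dominant rational map $\pi\circ f:X\dashrightarrow Z$. Since $f$ is an automorphism of $X$, the product $f\times\id_Z$ is an automorphism of $X\times Z$ that carries the graph $\Gamma_{\pi\circ f}$ isomorphically onto $\Gamma_\pi$ while commuting with the projection to $Z$; hence $\Gamma_{\pi\circ f}$ is again equidimensional over $Z$, its general fibres are rationally connected (being the $f^{-1}$-images of those of $\pi$), and the target $Z$ is still non-uniruled. Thus $\pi\circ f$ satisfies conditions (1)--(3), and the universal property (4) yields a birational morphism $v:Z\to Z$ with $\pi=v\circ\pi\circ f$, that is $\pi\circ f^{-1}=v\circ\pi$. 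Running the identical argument with $f^{-1}$ in place of $f$ gives a birational morphism $w:Z\to Z$ with $\pi\circ f=w\circ\pi$. Substituting one relation into the other produces $w\circ v\circ\pi=\pi$ and $v\circ w\circ\pi=\pi$; since $\pi$ is dominant and $Z$ is separated this forces $w\circ v=v\circ w=\id_Z$, so that $v$ and $w$ are mutually inverse isomorphisms. Setting $g:=w=v^{-1}\in\Aut(Z)$ gives $\pi\circ f=g\circ\pi$.

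Next I would construct $W$. Let $W$ be the normalization of $\Gamma_\pi\subseteq X\times Z$, with projections $p:W\to X$ (birational) and $q:W\to Z$ (surjective). Because $\pi\circ f=g\circ\pi$, the automorphism $f\times g$ of $X\times Z$ sends $(x,\pi(x))$ to $(f(x),\pi(f(x)))$ and therefore preserves $\Gamma_\pi$; by functoriality of normalization it lifts to $f_W\in\Aut(W)$ satisfying $p\circ f_W=f\circ p$ and $q\circ f_W=g\circ q$, so $p$ and $q$ are the required $f_W$-equivariant fibrations over $X$ and $Z$. Conditions (1)--(3) then follow at once: $W$ is normal projective by construction; the finite map $W\to\Gamma_\pi$ preserves fibre dimensions, so $q$ is equidimensional; and over a general point of $Z$ this map is an isomorphism, so the general fibre of $q$ coincides with that of $\Gamma_\pi\to Z$ and is rationally connected by the defining property of the special MRC fibration.

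The hard part will be the descent step, specifically confirming that $\pi\circ f$ satisfies \emph{all} of (1)--(3) so that the universal property can be invoked; the equidimensionality of $\Gamma_{\pi\circ f}$ over $Z$ is exactly where it matters that $f$ is an automorphism rather than a merely dominant self-map, since it is the biregularity of $f\times\id_Z$ that transports the equidimensional graph $\Gamma_\pi$ to $\Gamma_{\pi\circ f}$. Once both $f$ and $f^{-1}$ have been fed into (4), the promotion of the birational morphism $v$ to a genuine isomorphism is the formal consequence recorded above, and the construction of $W$ together with the verification of (1)--(3) is then routine, equivariance being built directly into the two projections from the graph.
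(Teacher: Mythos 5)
The paper itself gives no proof of this statement: it is quoted directly from \cite[Lemma 2.11]{HL21}, so there is no internal argument to compare yours against. That said, your two-step scheme --- first descend $f$ to $g\in\Aut(Z)$ by feeding $\pi\circ f$ and $\pi\circ f^{-1}$ into the universal property (4) and playing the two resulting birational morphisms $v,w$ off each other, then take $W$ to be the normalization of $\Gamma_\pi$ and lift the automorphism $f\times g$ of $X\times Z$ preserving $\Gamma_\pi$ --- is exactly the natural route, and it is in substance how the cited lemma is proved. The descent step is correct and complete: the biregularity of $f\times\id_Z$ does transport equidimensionality and rational connectedness of fibres from $\Gamma_\pi$ to $\Gamma_{\pi\circ f}$, and the promotion of $v$ to an isomorphism from $v\circ w\circ\pi=\pi$ and $w\circ v\circ\pi=\pi$ is valid since $\pi$ is dominant and $Z$ is a variety.

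One step of your verification of (2) is, however, stated incorrectly, although the conclusion survives. You claim the normalization $\nu\colon W\to\Gamma_\pi$ ``is an isomorphism over a general point of $Z$,'' so that general fibres of $q$ literally coincide with those of $\Gamma_\pi\to Z$. This is unjustified: the non-normal locus $N\subset\Gamma_\pi$ may dominate $Z$, in which case every fibre of $\Gamma_\pi\to Z$ meets $N$ and $\nu$ is an isomorphism over no point of $Z$. The repair is standard. Since $q\colon W\to Z$ is a dominant morphism of irreducible varieties, every component of every fibre has dimension at least $d=\dim W-\dim Z$; on the other hand, $\nu^{-1}(N)$ is finite over $N$, so for general $z$ the set $q^{-1}(z)\cap\nu^{-1}(N)$ has dimension strictly less than $d$. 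Hence no component of a general fibre $q^{-1}(z)$ lies in $\nu^{-1}(N)$, so $q^{-1}(z)$ meets the isomorphism locus of $\nu$ in a dense open subset isomorphic to a dense open subset of the corresponding (irreducible, rationally connected) fibre of $\Gamma_\pi\to Z$; this forces $q^{-1}(z)$ to be irreducible and birational to that fibre. As rational connectedness is a birational invariant of proper varieties in characteristic zero, condition (2) follows. With this correction your proof is complete.
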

\begin{lemma}\label{kappa<0}
 Let $f$ be an automorphism of a  normal projective  threefold $X$ with only klt singularities and $\kappa(X)=-\infty$.
Then we may reduce  ZDO for $(X,f)$ to the following two cases:\begin{itemize}
\item[(1)]  $X$ is a  rationally connected threefold;
\item[(2)] $X$ is a  uniruled threefold admitting a special MRC fibration over an elliptic curve.
\end{itemize}
\end{lemma}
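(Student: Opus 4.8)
The plan is to run the special MRC fibration of $X$ and analyse it according to the dimension of the non-uniruled base, exactly in the spirit of the proof of Lemma \ref{kappa=0} and Proposition \ref{threefold}. Since $X$ is a klt threefold with $\kappa(X)=-\infty$, the minimal model program and abundance in dimension three show that $K_X$ is not pseudoeffective and hence that $X$ is uniruled; consequently the special MRC fibration $\pi:X\dasharrow Z$ of $X$ has a non-uniruled base $Z$ with $\dim Z\le 2$. Applying the descent property (Theorem \ref{descent}) I would produce a birational morphism $p:W\to X$, an automorphism $f_W\in\Aut(W)$ with $p\circ f_W=f\circ p$, and an equidimensional surjective morphism $q:W\to Z$ with $q\circ f_W=h\circ q$ for the induced self-map $h$ of $Z$; since $f_W$ is an automorphism and $q$ is surjective, $h$ is an automorphism of $Z$. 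By Proposition \ref{birational}, ZDO for $(X,f)$ is equivalent to ZDO for $(W,f_W)$, so it suffices to treat $(W,f_W)$, and I split into cases according to $\dim Z$.

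If $\dim Z=0$, then $W$, and hence $X$, is rationally connected, which is case (1). If $\dim Z=2$, then $Z$ is a non-uniruled normal projective surface and $h$ is an automorphism, in particular a surjective endomorphism, so ZDO holds for $(Z,h)$ by \cite[Theorem 1.9]{JXZ23}. Since $\dim W=3=\dim Z+1$, since $f_W$ is an automorphism, and since $q\circ f_W=h\circ q$, Proposition \ref{X=Y+1} yields ZDO for $(W,f_W)$, and therefore for $(X,f)$.

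It remains to handle $\dim Z=1$, where $Z$ is a non-uniruled smooth projective curve, i.e. $g(Z)\ge 1$. If $g(Z)\ge 2$, then $\Aut(Z)$ is finite, so the induced automorphism $h$ has finite order; passing to the quotient of $Z$ by the finite cyclic group $\langle h\rangle$ produces a curve whose function field pulls back through $\pi$ to a non-constant $f$-invariant rational function, so that $k(X)^f\ne k$ and ZDO holds trivially. If $g(Z)=1$, then $Z$ is an elliptic curve and we are precisely in case (2): $X$ is a uniruled threefold whose special MRC fibration has elliptic base. This exhausts all possibilities for $\dim Z$ and completes the reduction.

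The main obstacle is not the case division itself but justifying that the self-map induced on the base $Z$ is a genuine surjective endomorphism, indeed an automorphism, so that the surface result of \cite{JXZ23} and the finiteness of $\Aut(Z)$ in the higher-genus case become available; this is exactly what the descent property of the special MRC fibration in Theorem \ref{descent} provides, once one replaces the merely rational map $\pi$ by the honest morphism $q:W\to Z$. A secondary but essential ingredient is the appeal to uniruledness of $X$ extracted from $\kappa(X)=-\infty$ via the minimal model program for threefolds, which is what guarantees $\dim Z\le 2$ and rules out the degenerate possibility $Z=X$.
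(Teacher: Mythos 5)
Your proposal is correct and follows essentially the same route as the paper: run the special MRC fibration, use Theorem \ref{descent} and Proposition \ref{birational} to replace the rational map by an equivariant morphism, then split on $\dim Z$, handling $\dim Z=2$ via \cite[Theorem 1.9]{JXZ23} together with Proposition \ref{X=Y+1} and reducing $\dim Z=1$ to an elliptic base --- exactly the content the paper packages into Proposition \ref{threefold}. The only differences are that you inline that proposition and explicitly fill in two steps the paper leaves implicit or states tersely: that $\kappa(X)=-\infty$ forces $X$ to be uniruled (ruling out $Z=X$), and that the genus $\ge 2$ case yields $k(X)^f\ne k$ via pulled-back invariant rational functions rather than the paper's brief ``no dense orbit'' remark.
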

\begin{proof}
Consider the special MRC fibration $\pi: X\dasharrow Z$.
Then by Theorem \ref{descent} and Proposition \ref{birational}, we  assume that $\pi: X\to Z$ is an $f$-equivariant morphism and the general fibrer of $q$ is rationally connected.
If $\dim Y=0$, then $X$ is rationally connected.
Now assume that $\dim Y>0$.
Then the proof follows from Proposition \ref{threefold} as $Y$ is non-uniruled.
\end{proof}
\begin{proof}[Proof of Theorem \ref{reduced-auto}]
The proof follows from Lemmas \ref{kappa=0} and \ref{kappa<0}.
\end{proof}
Now let $f$ be an automorphism of a normal projective variety $X$ with $K_X\sim0$ and $q(X)>0$.
By \cite[Lemma 2.7]{HL21}, there is a quasi-\'etale cover $\pi:\widetilde{X}\to X$ and a lifting $\widetilde{f}\in \Aut(\widetilde{X})$ of $\widetilde{X}$ such that $\widetilde{X}=A\times Z$, $A$ and $Z$ respectively are an abelian variety and a weak Calabi-Yau variety, and $\widetilde{f}=f_A\times f_Z$ with $f_A\in\Aut(A)$ and $f_Z\in\Aut(Z)$.
Here, $\dim Z\ge2$.
Then an interesting question is asked as follows.
\begin{question}
\label{que-K0}
Let $X=A\times Z$ be the product of an abelian variety $A$ and a weak Calabi-Yau variety $Z$ with  $\dim Z\ge2$, and $f=f_A\times f_Z$ with $f_A\in\Aut(A)$ and $f_Z\in\Aut(Z)$.
Assume that AZD is true for $(Z,f_Z)$.
Is  AZD true for $(X,f)$?
\end{question}
\begin{remark}
When $\dim X=3$, Question \ref{que-K0} has an affirmative answer  by \cite[Theorem 1.9]{JXZ23} and Proposition \ref{threefold}.
\end{remark}

\end{document}